\theoremstyle{plain}
\newtheorem{theorem}{Theorem}[section]
\newtheorem{lemma}[theorem]{Lemma}
\newtheorem{corollary}[theorem]{Corollary}
\newtheorem{proposition}{Proposition}[section]
\theoremstyle{remark}
\newtheorem{example}{Example}
\newtheorem{remark}{Remark}
\renewcommand{\kappa}{\varkappa}
\setlist[enumerate]{leftmargin=.5in}
\setlist[itemize]{leftmargin=.5in}
\renewcommand{\H}{\mathcal{H}}
\newcommand{\R}{{\mathbb{R}}}
\newcommand{\N}{{\mathbb{N}}}
\newcommand{\A}{\mathscr{A}}
\newcommand{\B}{\ty{\mathcal{B}}}
\renewcommand{\A}{\mathcal{A}}
\renewcommand{\P}{{\mathsf P}}
\newcommand{\E}{{\mathsf E}}
\renewcommand{\i}{\mathrm{i}}
\newcommand{\C}{\mathbb{C}}
\newcommand{\CC}{\mathcal{C}}
\def\R{\mathbb{R}}
\def\N{\mathbb{N}}
\newcommand{\eps}{\varepsilon}
\newcommand{\Var}{\operatorname{Var}}
\newcommand{\cov}{\operatorname{cov}}
\newcommand{\argmin}{\operatornamewithlimits{arg\,min}}
\renewcommand{\Re}{\operatorname{Re}}
\renewcommand{\L}{\mathcal{L}}
\renewcommand{\i}{\mathrm{i}}
\renewcommand{\B}{\mathcal{B}}
\begin{document}

\begin{frontmatter}
\title{{Decompounding under general mixing distributions}}
\runtitle{{Decompounding under general mixing distributions}}

\begin{aug}

\author[A]{\fnms{Denis}~\snm{Belomestny}\ead[label=e1]{denis.belomestny@uni-due.de}\ead[label=u1,url]{https://denbel.github.io}\orcid{0000-0002-9482-6430}}
\author[B]{\fnms{Ekaterina}~\snm{Morozova\!\!}\ead[label=e2]{ekaterina.morozova@uni-due.de}
\orcid{0009-0001-1024-512X}}
\author[C]{\fnms{Vladimir}~\snm{Panov}\ead[label=e3]{vpanov@hse.ru}\orcid{0000-0001-8395-1909}}
\address[A]{University of Duisburg-Essen, Essen, Germany\printead[presep={,\ }]{e1,u1}}

\address[B]{University of Duisburg-Essen, Essen, Germany\printead[presep={,\ }]{e2}}

\address[C]{HSE University, Moscow, Russia\printead[presep={,\ }]{e3}}

\runauthor{Belomestny et al.}
\end{aug}

\begin{abstract}
This study focuses on statistical inference for compound models of the form \(X=\xi_1+\ldots+\xi_N\), where \(N\) is a random variable denoting the count of summands, which are independent and identically distributed (i.i.d.) random variables \(\xi_1, \xi_2, \ldots\). The paper addresses the problem of reconstructing the distribution of \(\xi\) from observed samples of \(X\)'s distribution, a process referred to as decompounding, with the assumption that \(N\)'s distribution is known. This work diverges from the conventional scope by not limiting \(N\)'s distribution to the Poisson type, thus embracing a broader context. We propose a nonparametric estimate for the density of  \(\xi,\)  derive its  rates of convergence and prove that these rates are minimax optimal for suitable classes of distributions for \(\xi\) and \(N\). Finally, we illustrate the numerical performance of the algorithm on simulated examples.
\end{abstract}

\begin{keyword}
\kwd{Compound models}
\kwd{decompounding}
\kwd{inverse problem}
\kwd{minimax rates}
\end{keyword}

\end{frontmatter}

\section{Introduction}
Consider a random variable \(X\) defined as a sum of a random number \(N\) of independent and identically distributed (i.i.d.) random variables \(\xi_1, \xi_2, \ldots\), i.e.,
\begin{equation}
\label{randomsum_main}
X = \sum_{k=1}^{N} \xi_k, 
\end{equation}
where \(N\) and \(\xi_1, \xi_2, \ldots\) are independent. This model can be seen as a generalization of Poisson random sums, which corresponds to the case when \(N\) follows the Poisson distribution. Given a sample from this model, a natural question is how to estimate the distribution of \(\xi_1\), assuming that the distribution of \(N\) is known. This problem has been explored in several studies, primarily within a parametric framework and especially when \(N\) is Poisson-distributed. A critical observation in nearly all these studies is that the characteristic function of \(X\) equals the superposition of the probability generating function of \(N\) and the characteristic function of \(\xi\),
\[ 
\phi_X(u) = \mathsf{E} [e^{\i u X}] = \mathcal{P}_N (\phi_\xi(u)) \quad \text{with} \quad \mathcal{P}_N(z) = \sum_{k=1}^\infty \mathtt{p}_k z^k, \quad \mathtt{p}_k=\P(N=k),
\]
where \(\phi_\xi(u) = \mathsf{E} [e^{i u \xi}]\) is the characteristic function of \(\xi\). Therefore, the function  \(\phi_\xi(u)\) (and hence the distribution of \(\xi\)) can be recovered if the inverse function of \(\mathcal{P}_N\) is well-defined and precisely known. In simpler cases, such as when \(N\) has a Poisson or geometric distribution, this process is straightforward.  In the case when \(\xi\) is a discrete random variable, one can use the recursion formulas (known as the Panjer recursions in the context of actuarial calculus, see Johnson et al., \citeyear{JKK}) to recover the distribution of  \(\xi\).  For other methods of this type, we refer to the monograph by Sundt and Vernic~(\citeyear{SundtVernic:09}). However, the problem is inherently ill-posed; minor perturbations in \(\phi_X\) (e.g., due to limited data) can result in significant errors in \(\phi_\xi(u)\).
In addressing the statistical  aspect of this problem, the concept of Panjer recursion was utilized by Buchmann and Gr\"ubel~(\citeyear{BuchmannGrubel:03}, \citeyear{BuchmannGrubel:04}), who are credited with introducing the term ``decompounding''. Subsequently, other estimation methods were developed for the same model, including  kernel-based estimates (Van Es et al., \citeyear{VanEs_decompounding}), convolution power estimates (Comte et al., \citeyear{Comte_CPP}), spectral approach (Coca, \citeyear{Coca_decompounding}) and Bayesian estimation techniques (Gugushvili et al., \citeyear{Gugushvili2018non}). Let us stress that all  these methods are designed   for the case when \(N\) has  a Poisson distribution. 
The case of a generally distributed \(N\) was examined by B{\o}gsted and Pitts~(\citeyear{Bogsted}), who proposed inverting \(\mathcal{P}_N\) via series inversion. However, they did not provide convergence rates, and their approach requires that \(\xi_1\) be positive with probability \(1\) and \(N\) take the value 1 with positive probability.
\par
It's worth noting that there's a significant demand for broader classes for the distribution of  
\(N\)  across different applied disciplines, such as actuarial science and queuing theory. Asmussen and Albrecher~(\citeyear{AA_ruin}) suggested that the widespread use of the Poisson distribution in actuarial models is largely attributed to its analytical simplicity and the ease with which its results can be interpreted, rather than any concrete evidence of its efficacy. Meanwhile, the exploration of non-Poisson arrival processes in queuing theory has been the focus of numerous studies. For instance, the work by Chydzinski~(\citeyear{chydzinski2020queues}) delves into these processes, offering insights that can be contrasted with findings from Poisson-based models, as seen in the paper by Hansen and Pitts~(\citeyear{HansenPitts}) or den Boer and Mandjes~(\citeyear{den2017convergence}).\newline
\par

\textit{Contribution.} This paper makes two significant contributions. Firstly, we conduct  an in-depth theoretical analysis of the underlying statistical inverse problem when \(N\), the count variable, has a general distribution. Our approach leverages the equation
\begin{equation} 
\label{eq:NN}
\phi_X(u) = \mathcal{L}_N\bigl(-\psi_\xi(u)\bigr), 
\end{equation}
where \(\mathcal{L}_N(w) = \mathsf{E}[e^{-wN}]\) for \(w\in\mathbb{C}\), is the Laplace transform of \(N\), and \(\psi_\xi(u) = \log(\phi_\xi(u))\), assuming the principal branch of the complex logarithm and that the characteristic function of \(\xi\) is devoid of real zeros. The method involves estimating \(\psi_\xi\) by inverting \(\mathcal{L}_N(w)\) with respect to \(w\). Following this, we apply the regularized inverse Fourier transform to approximate the density of \(\xi\).
Secondly, the paper establishes convergence rates for the proposed density estimate across various distribution classes, demonstrating that these rates achieve minimax optimality. Notably, we find that when \(\mathsf{P}(N=1) > 0\), the minimax convergence rates align with those obtained from direct observations of \(X\). This marks the first instance of deriving minimax rates for general \(N\) scenarios in existing literature.\newline

\textit{Structure.} The paper is organised as follows. In the next section we present the estimation procedure and show the rates of convergence to the true density. Subsection~\ref{sec32} is devoted to the nonasymptotic properties of the proposed estimate. The main result of this section, Theorem~\ref{thm:gen-bound}, gives the upper bound of the MSE on the sequence of events \(\A_n\), the probabilities of which tend to 1 rather fast provided that some conditions are fulfilled (see  Lemma~\ref{cor-sym} and examples in Subsection~\ref{sec22}). Next, in Subsection~\ref{sec23}, we show the asymptotic upper bounds for several classes of distributions  (Theorem~\ref{thm:minmax-pol}) and prove that these bounds are minimax optimal for several classes of densities (Theorem~\ref{thm:lb}).  Also we propose an adaptive procedure for the choice of the cutoff parameter, and show that the MSE of our estimate with adaptive choice differs from the minimal possible MSE by a negligible term, at least under some assumptions on the considered class (Theorem~\ref{thm25}). Section~\ref{sec3} contains several numerical examples showing the efficiency of the proposed algorithm and illustrating the difference between various classes of distributions.  Next, in Section~\ref{sec:RD} we provide a real-life example of the recovery of  the claim amount density on the  liability policies dataset. In Section~\ref{sec4}, we discuss the case of constant \(N\).  The main outcomes from our study and some related open problems are discussed in Section~\ref{sec_con}. The proofs are collected in Section~\ref{sec5}.

\section{Main results}
\label{sec:main}
The key idea of the estimation procedure is to apply the inverse  Laplace transform of \(N\) (with respect to its complex-valued argument) to both parts of the equality~\eqref{eq:NN}, that is, 
\begin{eqnarray}
\label{Ninverse}
\psi_\xi(u) = -\L^{-1}_N(\phi_X (u)), \quad u\in\R.
\end{eqnarray}
Note that \(\L_N(w)\) is an analytic function for any \(w\) with \(\Re(w)>0\) (in particular, for \(w=-\psi_\xi(u), \;  u \in \R\)), and therefore  the inverse   function \(\L^{-1}_N\) exists and is analytic at the point \(\phi_X (u)\), provided that
\begin{eqnarray}\label{LNN}
(\L_N)'\left(-\psi_\xi(u)\right) \ne 0,\quad u \in \R.
\end{eqnarray}
Note that if \(\E[N]<\infty,\) we have  
\begin{eqnarray*}
(\L_N)'\left(-\psi_\xi(u)\right) 
=   \sum\limits_{k=1} ^{\infty} k (\phi_\xi(u))^{k}\mathtt{p}_k
= \E [N] \phi_\Lambda(u),
\end{eqnarray*}
where \(\phi_\Lambda(u)\) is the characteristic function of the random variable  \(\Lambda = \xi_1 + ... + \xi_\tau\) with \(\tau\) such that \(\P(\tau=k) = k \mathtt{p}_k/\E[N], k=1,2,\dots\), and therefore~\eqref{LNN} is equivalent to \(\phi_\Lambda(u) \ne 0, \; u \in \R.\) This assumption holds under rather simple sufficient conditions (see Appendix~\ref{appB}). We won't be discussing these conditions here as we need to generalise~\eqref{LNN} to determine the convergence rates in the next section. More precisely, we will assume that \((\L_N)'(z)\neq 0\) not only at the point $-\psi_\xi(u)$, but also in some vicinity of this point, which we will specify later. 

The formula~\eqref{Ninverse} suggests  the following estimation scheme. First, we estimate the characteristic function \(\phi_X (u)\) based on a sample \(X_1,\ldots,X_n\) from the distribution of $X$ using the empirical characteristic function \(\widehat{\phi}_{X}(u)=n^{-1} \sum_{k=1}^n e^{\i u X_k}. \)  Second, we estimate the function \(\psi_\xi\) via  \(\widehat{\psi}_\xi(u)=-\L^{-1}_N(\widehat\phi_X (u))\) and get  estimate for the characteristic function of \(\phi_\xi\) as \(\widehat\phi_\xi(u)=\exp(\widehat{\psi}_\xi(u)).\)
Note that \(\L^{-1}_N(\widehat\phi_X (u))\) is well defined for \(u\) in some vicinity of \(0\) due to \(\Re(\widehat\phi_\xi(0))=1.\)  
Finally, we 
use a regularised  inverse Fourier transform to estimate the distribution of \(\xi. \)  So, the scheme is as follows: 
\[ \boxed{X_1,\dots, X_n \; \rightarrow \; \widehat\phi_X (u)  
\; \rightarrow \; 
\widehat{\psi}_\xi (u)
\; \rightarrow \; 
\widehat{\phi}_{\xi} (u)
\rightarrow \;
\text{Law}(\xi). }\]
Assuming that the distribution of \(\xi\) is absolutely continuous with density \(p_\xi\),  the estimation scheme explained above leads to the following estimator
\begin{equation}
\label{est2}
\widehat{p}_\xi(x) := \frac{1}{2\pi} \int_{-U_n} ^{U_n} e^{-\i ux}\widehat{\phi}_{\xi} (u)\,du 
= \frac{1}{2\pi}\int_{-U_n} ^{U_n} \exp\left\{-\i ux - \L^{-1}_N\bigl(\widehat{\phi}_X(u)\bigr)\right\}\,du
\end{equation}
for any \(x\in\R,\) where \(U_n\) is a sequence of positive numbers  tending to infinity at a proper rate in order to ensure that \(\L^{-1}_N\bigl(\widehat{\phi}_X(u)\bigr)\)  for \(u\in[-U_n,U_n]\) is well defined on an event of positive probability.

Note that the estimate~\eqref{est2} is real-valued, because
\begin{eqnarray*}
\overline{\widehat{p}_\xi(x)} &:=& \frac{1}{2\pi}\int_{-U_n} ^{U_n} \exp\left\{\i ux - \overline{\L^{-1}_N\bigl(\widehat{\phi}_X(u)\bigr)}\right\}\,du\\
&=&
\frac{1}{2\pi}\int_{-U_n} ^{U_n} \exp\left\{\i ux - \L^{-1}_N\bigl(\widehat{\phi}_X(-u)\bigr)\right\}\,du = \widehat{p}_\xi(x),
\end{eqnarray*}
where the second equality follows from the fact 
\(
\overline{\L^{-1}_N(z)}  = \L^{-1}_N(\overline{z}).
\)
Indeed, 
\begin{eqnarray*}
\L_N\bigl( 
\overline{\L^{-1}_N(z)}
\bigr)  =
\sum_{k=1}^\infty e^{-\overline{\L^{-1}_N (z)} k} \P \Bigl\{ 
N=k
\Bigr\} = \overline{\sum_{k=1}^\infty e^{-\L^{-1}_N (z) k} \P \Bigl\{ 
N=k
\Bigr\}}
=\overline{z}.
\end{eqnarray*}

\subsection{Nonasymptotic bounds}\label{sec32}

As we will see below, the study of the difference between the estimator~\eqref{est2} and the true density function \(p_\xi(u)\) is based on the Taylor decomposition of the function  
\begin{eqnarray*}
\H(z):=\exp(-\L^{-1}_N(z)),
\end{eqnarray*}
in the vicinity of the point \(z= \phi_X(u)\), which covers the point \(z=\widehat\phi_X(u).\) This vicinity should be included in
  the region of analyticity of the function \(\H(z)\), which we denote by \(\CC\). Moreover, for the theoretical study, we need a boundedness of \(\H'(z)\) and \(\H''(z)\) in this vicinity. 
 
Below we will introduce the event \(\mathcal{A}_n(\varkappa)\), which reflects these properties. However,  for the sake of clarity, let us show that these properties hold at the point \(z=\phi_X(u)\) under some mild conditions. In fact,   the discussion above implies that \(\phi_X(u) \in \CC, \forall u \in \R,\) provided that  \(\phi_\Lambda\) does not vanish on \(\R\).  The direct calculation yields \begin{eqnarray}\label{H}
\H'(z) &=& - \frac{\exp(-\L^{-1}_N(z))}{\left(\L_N\right)'\left(\L^{-1}_N(z)\right)} = 
 \frac{1}{\sum_{k=1}^\infty k \texttt{p}_k e^{-(k-1)\L^{-1}_N(z) }}
, \qquad z \in \CC,\\
\label{H2}
\H''(z)&=&
(\H'(z))^3 \sum_{k=2} ^{\infty} \mathtt{p}_k (k^2-k)\exp(-(k-2)\L^{-1}_N(z)), \qquad z \in \CC.
\end{eqnarray}
At the point \(z=\phi_X(u), u \in \R\), these derivatives are equal to 
\begin{eqnarray*}\label{G}
\H'\bigl (\phi_X(u)\bigr) 
&=& - \frac{\phi_{\xi}(u)}{(\L_N)'(-\psi_{\xi}(u))}
=
-
 (\E[N])^{-1}
\frac{\phi_{\xi}(u)}{\phi_{\Lambda}(u)}, \\ 
\H''\bigl (\phi_X(u)\bigr) 
&=&
 (\H'(\phi_{X}(u)))^3 \sum_{k=2} ^{\infty} \mathtt{p}_k (k^2-k)\bigl( 
 \phi_\xi(u)
 \bigr)^k,\end{eqnarray*} 
and therefore are uniformly bounded on \(\R\), provided that \(\E[N^2]<\infty, \mathtt{p}_1 >0\) and \eqref{LNN} holds. Indeed, \(\phi_\Lambda\) does not vanish on \(\R\), and in this case  \(\H'\bigl (\phi_X(u)\bigr) \) is bounded as a continuous function   since its limit  for \(u \to \infty\) is equal to 
\begin{eqnarray*}
\lim_{u\to\infty} \H'\bigl (\phi_X(u)\bigr) &=&
- \frac{
 (\E[N])^{-1}}{
 \P\bigl\{\tau=1\bigr\} + \lim_{u\to\infty} \sum_{k=2}^\infty
 (\phi_\xi(u))^{k-1} \P\bigl\{\tau=k\bigr\}}\\
&=&
-
 \frac{
 (\E[N])^{-1}}{
  \P\bigl\{\tau=1\bigr\} 
  }  =
  -
  \frac{1}{\mathtt{p}_1},
\end{eqnarray*}
where the second equality follows from the Riemann--Lebesgue lemma and the fact that the series is uniformly bounded by 1. 
Moreover, \( \H''\bigl (\phi_X(u)\bigr) \) is bounded due to the trivial estimate \(| \H''\bigl (\phi_X(u)\bigr) | \leq  | \H'\bigl (\phi_X(u)\bigr) |^3(\E[N^2]+\E[N]), u \in \R.\)

Now let us turn towards the analyticity of \(\H(z)\) and the boundedness of \(\H'(z)\) and \(\H''(z)\) in some vicinity of the point \(z=\phi_X(u)\). Introduce the following event 
\begin{multline*}
\mathcal{A}_n(\varkappa):=
\biggl\{ 
\phi_{X,\tau}(u) \in \CC,  \; \forall \tau\in [0,1], \; \forall u\in[- U_n,U_n]
\biggr\} \\
\cap
\biggl\{\max_{\tau\in [0,1]}\max\limits_{|u|\leq U_n} \max\bigl\{ |\H'\bigl(\phi_{X,\tau}(u)\bigr)|, |\H''\bigl(\phi_{X,\tau}(u)\bigr)|\bigr\} \leq \varkappa \biggr\},
\end{multline*}
where \(\phi_{X,\tau}(u):=\phi_X(u)+\tau(\widehat{\phi}_X(u)-\phi_X(u))\) and \(\varkappa>0.\) Note that if this event has positive probability for all \(n\), then the assumption~\eqref{LNN} holds. When there is no risk of confusion, we will use the simplified notation \(\A_n:=\A_n(\varkappa)\).

Let us formulate the main result of this section.
\begin{theorem}\label{thm:gen-bound}
Suppose that \(\phi_\xi\in L_1(\R)\) with \(C_\phi:=\|\phi_\xi\|_{L^1(\R)}.\)
Let us fix some \(\varkappa>0\) such that \(q_n=q_n(\varkappa):=\P(\mathcal{A}_n (\kappa))>0\) for all $n>n_0.$ 
Assume also that   \(\E [N^2]<\infty\), 
then it holds 
\begin{multline}
\label{thm2bound}
\E\left[\left(\widehat{p}_\xi(x)-p_\xi(x)\right)^2\Bigr|\A_n\right] 
\lesssim \biggl(\int_{|u|>U_n} \left|\phi_{\xi}(u)\right|\,du\biggr)^2 
+ \varkappa^2 \frac{U_n^2}{n^2 q_n^2}  \\+ \varkappa^2\frac{U_n }{n\,q_n}   C_\phi \E[N] +\varkappa^2\frac{U_n^2 }{n\,q_n}  (1-q_n)^{1/2} 
+
\varkappa \frac{U_n^2 }{n^{3/2}\,q_n^{1/2}} , \qquad n>n_0, 
\end{multline}
where \(\lesssim\) stands for inequality up to an absolute constant not depending on \(n\) and distributions of \(\xi, N.\)
\end{theorem}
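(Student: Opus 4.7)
Since $\phi_\xi\in L^1(\R)$, Fourier inversion gives $p_\xi(x)=(2\pi)^{-1}\int_\R e^{-\i ux}\H(\phi_X(u))\,du$, and subtracting from \eqref{est2} decomposes the error as $\widehat p_\xi(x)-p_\xi(x)=B(x)+S(x)$, where $B(x):=-(2\pi)^{-1}\int_{|u|>U_n}e^{-\i ux}\phi_\xi(u)\,du$ is a deterministic truncation bias satisfying $|B(x)|\le\int_{|u|>U_n}|\phi_\xi(u)|\,du$ (producing the first summand of \eqref{thm2bound}), and $S(x):=(2\pi)^{-1}\int_{-U_n}^{U_n}e^{-\i ux}[\H(\widehat\phi_X(u))-\H(\phi_X(u))]\,du$ carries the stochastic fluctuation.

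On $\A_n(\varkappa)$, every point of the segment $\{\phi_{X,\tau}(u):\tau\in[0,1]\}$ lies in $\CC$ with $|\H'|,|\H''|\le\varkappa$, so Taylor's formula with integral remainder gives, writing $\Delta_n(u):=\widehat\phi_X(u)-\phi_X(u)$,
$$\H(\widehat\phi_X(u))-\H(\phi_X(u))=\H'(\phi_X(u))\Delta_n(u)+\int_0^1(1-\tau)\H''(\phi_{X,\tau}(u))\Delta_n(u)^2\,d\tau.$$
This splits $S=L+M$ into a linear part $L(x):=(2\pi)^{-1}\int_{-U_n}^{U_n}e^{-\i ux}\H'(\phi_X(u))\Delta_n(u)\,du$ (well defined for every $\omega$, since $\H'(\phi_X(\cdot))$ is deterministic and, because $q_n>0$, uniformly bounded by $\varkappa$ on $[-U_n,U_n]$) and a quadratic remainder obeying $|M(x)|\le(\varkappa/(4\pi))\int_{-U_n}^{U_n}|\Delta_n(u)|^2\,du$ on $\A_n$.

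The variance of $L$ is computed via the standard double-integral identity over $(u_1,u_2)\in[-U_n,U_n]^2$ involving $\H'(\phi_X(u_1))\overline{\H'(\phi_X(u_2))}$ and the covariance $\cov(e^{\i u_1 X_1},e^{\i u_2 X_1})=\phi_X(u_1-u_2)-\phi_X(u_1)\phi_X(-u_2)$. The substitution $v=u_1-u_2$, $w=u_2$ bounds the $w$-slice by $2U_n\varkappa^2$ and reduces the $v$-integral to $\int_\R|\phi_X(v)|\,dv$, which inherits integrability from $\phi_\xi$ through $\phi_X=\L_N(-\psi_\xi)$, incurring at most a factor $\E[N]$. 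Combined with the naive inequality $\E[|L|^2\mid\A_n]\le\E|L|^2/q_n$, this yields the second summand of \eqref{thm2bound}. For $M$, Cauchy--Schwarz gives $(\int|\Delta_n|^2\,du)^2\le 2U_n\int|\Delta_n|^4\,du$, and the elementary fourth-moment bound $\E|\Delta_n(u)|^4\lesssim n^{-2}$ yields $\E[|M|^2\1_{\A_n}]\lesssim\varkappa^2 U_n^2/n^2$, producing the third summand after dividing by $q_n$.

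The main obstacle, and the source of the last two summands in \eqref{thm2bound}, is the bookkeeping around the data-dependent event $\A_n$: the identity $\widehat p_\xi-p_\xi=B+L+M$ is valid only on $\A_n$, while $L$ is defined everywhere, so passing between $\E[\cdot]$ and $\E[\cdot\mid\A_n]$ for \emph{signed} products requires care. I would expect the $\varkappa^2 U_n^2(1-q_n)^{1/2}/(nq_n)$ term to arise from controlling $\E[|L|^2\1_{\A_n^c}]\le(\E|L|^4)^{1/2}(1-q_n)^{1/2}$ via a deterministic $\varkappa U_n/\sqrt n$-type fourth-moment estimate for $L$, and the $\varkappa U_n^2/(n^{3/2}q_n^{1/2})$ term from the cross-product $2\Re\E[L\overline{M}\mid\A_n]$, controlled by Cauchy--Schwarz applied to the bounds just derived for $\E[|L|^2\mid\A_n]$ and $\E[|M|^2\mid\A_n]$. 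Collecting the five contributions then delivers \eqref{thm2bound}.
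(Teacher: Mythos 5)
Your proposal is correct and follows essentially the same route as the paper: truncation bias plus second-order Taylor expansion of $\H$ on $\A_n$, the covariance identity $\cov(\widehat\phi_X(u),\widehat\phi_X(v))=n^{-1}(\phi_X(u-v)-\phi_X(u)\overline{\phi_X(v)})$ for the linear term, fourth-moment bounds of order $n^{-2}$ for the quadratic remainder, and Cauchy--Schwarz to account for the complement event $\A_n^c$. The only (harmless) deviations are bookkeeping: the paper bounds the cross term $S_2$ directly by H\"older to produce the fifth summand, where you absorb it via $2\sqrt{AB}\le A+B$, and the $(1-q_n)^{1/2}$ term enters in the paper through the conditional-covariance split rather than through $\E[|L|^2\1_{\A_n^c}]$.
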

\begin{proof} The proof relies on the bias-variance decomposition, see Section~\ref{sec5}.  The first two terms in the right-hand side of~\eqref{thm2bound} correspond to the bias, while the last three terms correspond to the variance.
\end{proof}
The definition of the event \(\mathcal{A}_n\) is based on the function 
 \(\phi_{X,\tau}(u)\),
which depends on the unknown distribution \(\xi\) twice: through the characteristic function \(\phi_X\) and its estimator \(\widehat{\phi}_X\). This dependence makes the analysis of \(q_n=\P\{\mathcal{A}_n\}\) quite complicated, as there is even no guarantee that these probabilities are monotonically increasing with \(n\). However, the next lemma provides several cases, when the situation becomes simpler, and  \(q_n\) (or lower bounds for \(q_n\)) are the same for any distributions of \(\xi\) from certain classes. 


\begin{lemma}\label{cor-sym}
\begin{enumerate}
\item[(i)] If the distribution of \(\xi\) is symmetric, then there exists some \(\kappa>0\) such that  \(q_n=\P(\A_n(\varkappa))=1\) for all \(n\geq 1.\) 
\item[(ii)] If  there is \(\rho_0>1\) and some finite \(\varkappa=\varkappa(\rho_0)\) such that
\begin{equation}
\label{eq:ex-ass-h}
|\H'(z)|\leq \varkappa,\quad |\H''(z)|\leq \varkappa, \qquad \forall \; z\in \C: |z|\leq \rho_0,
\end{equation}
then also \(q_n=1\) for all \(n\geq 1.\) 
\item[(iii)]
If  there is \(\rho_1>1\) and some finite \(\varkappa=\varkappa(\rho_1)\) such that
\begin{equation}
\label{eq:ex-ass-h-phi}
|\H'(z)|\leq \varkappa,\quad |\H''(z)|\leq \varkappa,  \qquad \forall \; z \in \C: \Re(z)> -\rho_1,
\end{equation}
and moreover \(\Re(\phi_X(u))\neq 0 \;\; \forall u \in \R,\) then \(1-q_n \lesssim  \left(\sqrt{n}U_n\right)^{-2}\) for any  \(n\) such that 
\begin{equation}\label{nUn}
\log (n U_n^{2}) / n<(\rho_1/18)^2. 
\end{equation}
\end{enumerate}
\end{lemma}

\begin{proof}
(i)  Suppose that the distribution of \(\xi\) is symmetric, then by changing the empirical characteristic function
\(\widehat\phi_X(u)\) to its real part  we have that \(\phi_{X,\tau}(u)\in [-1,1]\) for all \(u \in \R.\) Hence, with probability \(1,\)
\[
| \H'(\phi_{X,\tau}(u)) | = \left[\sum_{k=1}^{\infty} k\, \mathtt{p}_k \exp\bigl(-(k-1)\L^{-1}_N(\phi_{X,\tau}(u))\bigr)\right]^{-1}\leq \frac{1}{\mathtt{p}_1},\quad \tau \in [0,1],
\]
for any \(u\in\R\), provided \(\mathtt{p}_1>0.\) Furthermore, we have  that \(\L^{-1}_N(\phi_{X,\tau}(u))\geq 0\) for all \(u\in \mathbb{R}\) and \(\tau\in [0,1]\) with probability \(1\) due to the fact that 
\[
\L_N(z) = 
\sum_{k=1}^{\infty} \mathtt{p}_k e^{-zk}\in [0,1] \quad \text{iff} \quad z\geq 0.
\]
Therefore, from~\eqref{H2} we get 
\begin{eqnarray*}
|\H''(\phi_{X,\tau}(u))|&\leq& \frac{\E[N^2]-\E[N]}{\mathtt{p}_1^3}=:\varkappa,\end{eqnarray*}
and conclude that under this choice of \(\varkappa\), we have \(q_n=\P\{\A_n(\varkappa)
\}=1\) for all \(n\geq 1.\) 

(ii)  This statement is obvious, since \(|\phi_{X,\tau}(u)|\leq 1\) for all \(u \in \R\) and \(\tau \in [0,1].\)

(iii) Using the fact that \(\Re(\phi_X(u))\geq 0\) for all \(u\in \R\) and Proposition 3.3 from Belomestny et al., \citeyear{BR_Levy}, we derive
\begin{eqnarray*}
1-q_n=\P\{\A_n^c\}&\leq& \P(\exists \tau \in [0,1], \exists u\in [-U_n,U_n]: \Re(\phi_{X,\tau}(u))\leq -\rho_1) \\ 
&\leq&  \P\bigl(\|\phi_X-\widehat \phi_X\|_{[-U_n,U_n]}\geq \rho_1\bigr)
\lesssim\left(\sqrt{n}U_n\right)^{-2},
\end{eqnarray*}
provided that~\eqref{nUn} holds.
\end{proof}
\subsection{Examples}\label{sec22}
In this section we discuss several important examples including two-point distribution and Poisson like distribution for \(N.\)
\begin{example}
\label{exm:two-points}
Let \(N\) take two values, 1 and 2, with probabilities \(p\in(0,1)\) and \(1-p\), respectively. Then
\begin{eqnarray} \label{LNex4}
\L_N(z) = pe^{-z}+(1-p)e^{-2z},\quad \forall z\in\C.
\end{eqnarray}
The inversion of the Laplace transform~\eqref{LNex4} leads to 
\begin{eqnarray*}
\L_N^{-1}(z) &=& -\log\Bigl( 
\frac{-p + \sqrt{p^2 + 4z(1-p)}}{2(1-p)}
\Bigr), 
\end{eqnarray*}
and therefore the function \(\H\) and its first and second derivatives are equal to
\begin{eqnarray*}
\H(z) 
&=& \frac{-p + \sqrt{p^2 + 4z(1-p)}}{2(1-p)},\\ 
\H'(z) 
&=&  \frac{1}{\sqrt{p^2 + 4z(1-p)}}, \\ 
\H''(z) 
&=& -   \frac{2(1-p)}{\bigl(p^2 + 4z(1-p)\bigr)^{3/2}}.
\end{eqnarray*}
If \(\rho^*:=p^2/ (4(1-p))>1\) the condition \eqref{eq:ex-ass-h} holds with \(1<\rho_0<\rho^*\) and 
\begin{eqnarray}\label{varkappa0}\varkappa=\varkappa(\rho_0):=\max\Bigl\{ \H'(-\rho_0), -\H''(-\rho_0)
\Bigr\}.\end{eqnarray}
On the other hand, the condition \eqref{eq:ex-ass-h-phi} is also fulfilled for \(0<\rho_1<\rho^*\) and \(\varkappa=\varkappa(\rho_1)\). 
As compared to \eqref{eq:ex-ass-h},  we do not need to assume that \(\rho^*>1,\) but have to check the condition~ \(\Re(\phi_X(u))\neq 0 \;\; \forall u \in \R,\) which depends on the distribution of \(\xi\).
\end{example}
\begin{example}
\label{exm:poisson}
Let \(N\) be distributed according to the shifted Poisson law, that is,
\[
\P\{N=k\}=\mathtt{c}_\lambda \frac{\lambda^k}{k!},\quad k=1,2,\dots,
\]
where \(\lambda>0\) and \(\mathtt{c}_\lambda:= 1/ (e^{\lambda}-1)\). In this case,
\begin{eqnarray*}
\L_N(z) = \mathtt{c}_\lambda \bigl( 
e^{\lambda e^{-z}} 
-1
\bigr),\qquad 
\L^{-1}_N(z) = -\log\Bigl(\frac{1}{\lambda}\log\left(z/\mathtt{c}_\lambda+1\right)\Bigr),
\end{eqnarray*}
where the formula for the inverse function is valid for all \(z\in\C\setminus\R_{\leq 0}\). 
Consequently direct calculations yield
\begin{eqnarray*}
\H(z) &=&  \frac{1}{\lambda}\log\left(z/\mathtt{c}_\lambda+1\right),\\
\H'(z) &=&  \frac{1}{\lambda \bigl( \mathtt{c}_\lambda+z\bigr)},\\
\H''(z) &=&- \frac{1}{\lambda \bigl(\mathtt{c}_\lambda+z\bigr)^2}.
\end{eqnarray*}

Again the condition   \eqref{eq:ex-ass-h} holds with \(1<\rho_0<\mathtt{c}_\lambda\) and \(\varkappa=\varkappa(\rho_0)\) given by~\eqref{varkappa0} with \(\H', \H''\) as above, provided \(\mathtt{c}_\lambda>1.\)  Similarly, the condition
\eqref{eq:ex-ass-h-phi} is also fulfilled for \(0<\rho_1<\mathtt{c}_\lambda\) and \(\varkappa=\varkappa(\rho_1).\)
\end{example}
\begin{example}
\label{exm:geom}
Let \(N\) be geometrically distributed with parameter \(p\in(0,1)\), that is,
\[
\P(N=k) = (1-p)^{k-1} p, \quad k= 1,2,\ldots.
\]
In this case, one observes that 
\begin{eqnarray*}
\L_N(z) &=& \frac{pe^{-z}}{1-(1-p)e^{-z}},\quad \Re(z)>\log(1-p), \\
\L^{-1}_N(z)
&=& -\log\left(\frac{z}{p+z(1-p)}\right),\quad z\neq -p/(1-p).\end{eqnarray*}
Direct calculations lead to the following formulas for any \(z\neq -p/(1-p):\)
\begin{eqnarray*}
\H(z) &=&
\frac{z}{p+z(1-p)},\\
\H'(z) &=& \frac{p}{(p+z(1-p))^{2}}, \\ 
\H''(z) &=& \frac{-2 p (1-p)}{(p+z(1-p))^{3}},
\end{eqnarray*}
and we conclude that if \(\rho^\star:=p/(1-p)>1\) (that is, \(p>1/2\)) the condition \eqref{eq:ex-ass-h} holds with \(1<\rho_0<\rho^\star\)  and \(\varkappa=\varkappa(\rho_0).\) On the other hand, the condition \eqref{eq:ex-ass-h-phi} is also fulfilled for \(0<\rho_1<\rho^\star\) and \(\varkappa=\varkappa(\rho_1).\) Compared to \eqref{eq:ex-ass-h}, here we do need to assume that \(\rho^\star>1.\)
\end{example}
\subsection{Minimax rates of convergence} \label{sec23}
As we have seen in Theorem~\ref{thm:gen-bound}, the non-asymptotic upper bound of the MSE consists of 5 terms. In general, 
the number of terms cannot be reduced  due to the unknown behaviour of 
 the probabilities \(q_n(\kappa)=\P\{\A_n(\kappa)\},\) which depend on the unknown distribution of \(\xi\). The probabilities may even be non-monotonic with respect to \(n\), see comments after Theorem~\ref{thm:gen-bound}. For our further analysis we will use the assumption \(1-q_n \lesssim U_n^{-2}\), which ensures that the  last two terms in~\eqref{thm2bound} are of order smaller than (or equal to) \(U_n/n,\) while the third term is exactly of this order. Note that this assumption  is satisfied for all cases considered in Lemma~\ref{cor-sym}.

On another side, the upper bound~\eqref{thm2bound} for the error of the proposed estimator essentially depends on the asymptotic behaviour of \(\phi_{\xi}\). Let us consider the following two classes of densities with respect to the behaviour of the corresponding characteristic functions: for some fixed \(\beta, \gamma>0,\) \(M>0\) define
\begin{eqnarray*}
\mathcal{C}(\beta,M):=\biggl\{p\in L_1(\R),\, p\geq 0:\quad \sup_{u\in \R} \Bigl\{(1+|u|)^{1+\beta}|\phi_p(u)|\Bigr\}\leq M\biggr\}
\end{eqnarray*}
and
\begin{eqnarray*}
\mathcal{E}(\gamma,M):=\biggl\{p\in L_1(\R),\,  p\geq 0:\quad \sup_{u\in \R} \Bigl\{\exp(c_\gamma|u|^\gamma)|\phi_p(u)|\Bigr\}\leq M\biggr\}, 
\end{eqnarray*}
where 
\[
\phi_p(u)=\int_{\R} e^{\i ux}p(x)\,dx, \qquad u \in \R,
\]
is the characteristic function of the random variable with the density \(p\). It can be seen that these classes are closely related to the notions of smooth and supersmooth error densities typical for the additive deconvolution problems; see, e.g., Meister (\citeyear{MeisterDeconv}), Section~2.4.1.

Then the following result holds.
\begin{theorem}
\label{thm:minmax-pol}
Suppose that for some \(\varkappa>1\), it holds \(1-q_n \lesssim  U_n^{-2}\)  for all \(n>n_0,\) that is, \(\P(\A_n)\geq 1-U_n^{-2}.\)  

(i) If 
\(p_\xi \in \mathcal{C}(\beta,M)\) for some \(\beta>1/2, M>0\) then
\begin{eqnarray*}
\max_{x\in \R}\E\left[\left(\widehat{p}_\xi(x)-p_\xi(x)\right)^2\Bigr|\A_n\right] 
\lesssim M^2 U_n^{-2\beta}+\varkappa^2 \frac{U_n^2}{n^2 q_n^2}+C(M, \varkappa,N) \frac{U_n}{n}, \,\, n>n_0,
\end{eqnarray*}
where \(C(M, \varkappa,N)= \varkappa^2 M \bigl(1 + \beta^{-1}\bigr)\E[N].\) 
Furthermore, under the choice \(U_{n}=n^{1/(1+2\beta)}\) we get 
\begin{eqnarray*}
\max_{x\in \R}\E\left[\left(\widehat{p}_\xi(x)-p_\xi(x)\right)^2\Bigr|\A_n\right] 
\lesssim \max(M^2, C(M, \varkappa,N))\, n^{-2\beta/(1+2\beta)},\quad n>n_0.
\end{eqnarray*}
(ii) If 
\(p_\xi \in \mathcal{E}(\gamma,M)\) for some \(\gamma>\gamma_\circ\) with \(\gamma_\circ>0\), 
and  some \(\gamma, M>0\) then
\begin{multline*}
\max_{x\in \R}\E\left[\left(\widehat{p}_\xi(x)-p_\xi(x)\right)^2\Bigr|\A_n\right] 
\lesssim M^2U_{n}^{2(1-\gamma)}e^{-2c_\gamma U_{n}^{\gamma}}\\
+
\varkappa^2 \frac{U_n^2}{n^2 q_n^2}+C(M, \varkappa,N)\frac{U_n}{n}, \quad
n>n_0,
\end{multline*}
where \(C(M, \varkappa,N)= \varkappa^2 M \E[N] \max\bigl(1, \Gamma(\gamma^{-1}_\circ+1)\bigr) c_\gamma^{-1/\gamma}.\) 
Under the choice \[U_{n}=(\log n/(2c_{\gamma}))^{1/\gamma}\] we get 
\begin{multline*}
\max_{x\in \R}\E\left[\left(\widehat{p}_\xi(x)-p_\xi(x)\right)^2\Bigr|\A_n\right] 
\lesssim \max(M^2,C(M, \varkappa,N))\, \frac{(\log n)^{\max(1,2(1-\gamma))/\gamma}}{n}, \\
 n>n_0.
\end{multline*}
\end{theorem}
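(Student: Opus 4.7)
The plan is to derive both parts of Theorem~\ref{thm:minmax-pol} as specializations of the nonasymptotic bound \eqref{thm2bound} in Theorem~\ref{thm:gen-bound}: I will estimate each of its five summands under the prescribed class assumption on $p_\xi$, identify the dominant contributions, and optimize $U_n$ to balance bias and variance.

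\textbf{Step 1: simplification of \eqref{thm2bound}.} First I simplify the nonasymptotic bound under the standing hypothesis $U_n\sqrt{1-q_n}\leq 1$, equivalently $1-q_n\leq U_n^{-2}$. This gives $q_n\geq 1/2$ for all sufficiently large $n$, so the third summand in \eqref{thm2bound} is $\lesssim\varkappa^2 U_n^2/n^2$, the fourth is $\lesssim\varkappa^2 U_n/n$, and the fifth is $\lesssim\varkappa U_n^2/n^{3/2}$. In each of the two asymptotic regimes below, the optimal $U_n$ satisfies $U_n\leq n^{1/2}$, which makes all three terms lower order than the leading variance term $\varkappa^2 U_n C_\phi\E[N]/(nq_n)\lesssim\varkappa^2 U_n C_\phi\E[N]/n$. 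Hence only the bias $(\int_{|u|>U_n}|\phi_\xi(u)|\,du)^2$ and this principal variance shape the rate, together with the residual $\varkappa^2 U_n^2/(n^2 q_n^2)$ retained explicitly in the stated bound.

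\textbf{Step 2: case (i).} For $p_\xi\in\mathcal{C}(\beta,M)$ the envelope $|\phi_\xi(u)|\leq M(1+|u|)^{-1-\beta}$ yields
\[
\int_{|u|>U_n}|\phi_\xi(u)|\,du\leq\frac{2M}{\beta}\,U_n^{-\beta},\qquad C_\phi=\|\phi_\xi\|_{L^1(\R)}\leq 2M\bigl(1+\beta^{-1}\bigr),
\]
and plugging these into the simplified bound produces the first inequality of (i) with the stated constant $C(M,\varkappa,N)=2\varkappa^2 M(1+\beta^{-1})\E[N]$. Balancing $M^2U_n^{-2\beta}\asymp C(M,\varkappa,N)\,U_n/n$ forces $U_n=n^{1/(1+2\beta)}$ and the minimax rate $n^{-2\beta/(1+2\beta)}$; the strict condition $\beta>1/2$ is exactly what guarantees $U_n\leq n^{1/2}$ so that Step~1 applies.

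\textbf{Step 3: case (ii).} For $p_\xi\in\mathcal{E}(\gamma,M)$ the change of variables $v=c_\gamma u^\gamma$ delivers the tail estimate $\int_{U_n}^\infty e^{-c_\gamma u^\gamma}\,du\lesssim U_n^{1-\gamma}e^{-c_\gamma U_n^\gamma}$ and the total-mass estimate $C_\phi\leq 2Mc_\gamma^{-1/\gamma}\Gamma(1/\gamma+1)$. The restriction $\gamma>\gamma_\circ$ is used precisely to bound $\Gamma(1/\gamma+1)$ uniformly by $\max(1,\Gamma(\gamma_\circ^{-1}+1))$ across the class. Substituting into the simplified bound produces the first displayed inequality of (ii). Finally, the choice $U_n=(\log n/(2c_\gamma))^{1/\gamma}$ gives $e^{-2c_\gamma U_n^\gamma}=n^{-1}$, so the bias is $\lesssim(\log n)^{2(1-\gamma)/\gamma}/n$ while the leading variance is $\lesssim(\log n)^{1/\gamma}/n$; taking the larger exponent yields the stated rate $(\log n)^{\max(1,2(1-\gamma))/\gamma}/n$.

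\textbf{Main obstacle.} The argument is essentially bias-variance bookkeeping, and the only genuinely delicate step is verifying that the three auxiliary remainders carried along in Theorem~\ref{thm:gen-bound} are indeed lower order than the principal variance, for which the hypothesis $U_n\sqrt{1-q_n}\leq 1$ is sharply calibrated (the fourth term $\varkappa^2 U_n^2(1-q_n)^{1/2}/(nq_n)$ being the most sensitive). A secondary subtlety, specific to part (ii), is the uniform control of $C_\phi$ via $\Gamma(1/\gamma+1)$, which degenerates as $\gamma\downarrow 0$ and is exactly what motivates the cut-off $\gamma>\gamma_\circ$ in the hypothesis.
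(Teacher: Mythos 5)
Your proposal is correct and follows essentially the same route as the paper's proof: both specialize the nonasymptotic bound of Theorem~\ref{thm:gen-bound} by inserting the tail and $L^1$ estimates for $|\phi_\xi|$ on each class, absorb the fourth and fifth remainders using $1-q_n\leq U_n^{-2}$, and then balance bias against the principal variance with the stated choices of $U_n$. Your Step~1 is in fact slightly more explicit than the paper's one-line dismissal of the last two summands, but the substance is identical.
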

It turns out that in case when the distribution of \(\xi\) is symmetric around zero, these rates are essentially minimax optimal. Namely, it follows from Lemma~\ref{cor-sym} that 
\begin{multline*}
\sup_{p_\xi\in \mathcal{C}(\beta,M)}\max_{x\in \R}\E\left[\left(\widehat{p}_\xi(x)-p_\xi(x)\right)^2\right] 
\lesssim \max(M^2, C(M, \varkappa,N))\, n^{-2\beta/(1+2\beta)},\\  n>n_0,
\end{multline*}
provided that \(\mathtt{p}_1>0.\) As shown in the next theorem, the lower bounds in this case are essentially of the same order.
\begin{theorem}[Lower bounds]
\label{thm:lb}
Let \(\P(N=1)>0\) and let \(\mathrm{Sym}(\R)\) be a class of symmetric functions on \(\R\). 
\begin{itemize}
\item[(i)] If \(p_{\xi}\in \mathcal{C}(\beta,M)\) with some \(\beta,M>0\), then 
\[
\inf_{\widehat{p}_{\xi}}\sup_{p_\xi\in \mathcal{C}(\beta,M)\cap \mathrm{Sym}(\R)}\max_{x\in \R}\E\left[\left(\widehat{p}_\xi(x)-p_\xi(x)\right)^2\right]\gtrsim n^{-2\beta/(1+2\beta)}.
\]
\item[(ii)] If \(p_{\xi}\in \mathcal{E}(\gamma,M)\) with some \(\gamma,M>0\), then 
\[
\inf_{\widehat{p}_{\xi}}\sup_{p_\xi\in \mathcal{E}(\gamma,M)\cap \mathrm{Sym}(\R)}\max_{x\in \R}\E\left[\left(\widehat{p}_\xi(x)-p_\xi(x)\right)^2\right]\gtrsim n^{-1},
\]
\end{itemize}

where infimum is taken over all estimators, that is, measurable functions of \(X_1,\ldots,X_n.\) 
\end{theorem}

It is worth noting that the polynomial rates of convergence are rather typical for the estimation approaches for compound Poisson processes, see, e.g., Comte et al. (\citeyear{Comte_CPP}), Propositions 4.2 and 4.3. A more detailed comparison with previous results, however, is not possible, since our approach is developed for the nonparametric setting, for which the convergence rates have never been studied before. Nevertheless, achieving parametric rates in this setup demonstrates the effectiveness of the proposed estimation procedure. \newline 

At the end of this section, let us mention that the optimal choices of the parameter \(U_n\) presented in Theorem~\ref{thm:minmax-pol}, depend on the parameters \(\beta\) and \(\gamma\), which are not known. For instance, as we have seen, for the class \(\mathcal{C}(\beta,M)\) with \(\beta > 1/2\), our estimate has optimal rate of convergence \(n^{-2\beta/(1+2\beta)}\) (that is, between \(n^{-1/2}\) and \(n^{-1}\)), provided that \(U_n=n^{1/(1+2\beta)}.\)  This observation leads to the introduction of adaptive estimates of \(U_n\),  which do not depend on \(\beta.\)

The techniques for the adaptive choice of parameters are known for the  estimates
which can be represented as \(\sum_{k=1}^n f(X_i)/n\) with some (known) \(f\). This class of estimates include kernel density estimates (Goldenshluger and Lepski, \citeyear{GL2011}), estimation in the density deconvolution problem (Goldenshluger and Kim, \citeyear{GK2021}), linear functional estimation  (Brenner Miguel et al., \citeyear{MCJ2023}). Some generalisations, when the estimate can be represented in the form \(\varphi\left(\frac{\sum_{k=1}^n f(X_i)}{n}\right)\), where \(\varphi\) is a polynomial function, are known, see, e.g., quadratic functional estimation (Neubert et al., \citeyear{NCJ2024})  and estimation for compound Poisson processes (Comte et al., \citeyear{Comte_CPP}).  Note that our estimate~\eqref{est2} has an essentially more complex structure.

Below we present an adaptive estimate of the parameter \(U_n\), assuming that the true density belongs to the class \(\mathcal{C}(\beta,M)\) with \(1/2<\beta < \bar{\beta}\), where \(\bar{\beta}\) is some (known) constant. We  leave a complete investigation of the adaptive approach (without any restrictions on the class  \(\mathcal{C}(\beta,M)\)) as an open question.

Following Brenner Miguel et al., (\citeyear{MCJ2023}), we  define an adaptive choice of the estimate on the equidistant grid, that is, \(U_n = kh \) with fixed \(h>0\) and \(k\) selected  as follows 
\begin{eqnarray}\label{k_hat}
\hat{k} = \hat{k}(x) = \argmin_{k=1..K_n} 
\Bigl\{ 
A(k, x) + V(k)
\Bigr\},
\end{eqnarray}
where \(V(k) = \ell k /n\) is a penalising term, and for all \(x\in\R\),
\begin{eqnarray*}
A(k,x) := \max_{k' = (k+1),...,K_n} \Bigl(\bigl(\widehat{p}_{\xi,k'}(x) - \widehat{p}_{\xi,k}(x)\bigr)^2 - V(k')\Bigr)_+, \qquad k=1,2,..,K_n,
\end{eqnarray*}
and  \(\widehat{p}_{\xi,k}(x) \)  is the estimate~\eqref{est2} with \(U_n= k h\). 
The following result holds. 
\begin{theorem}\label{thm25}
Assume that  \(p_\xi \in \CC(\beta, M)\) for some \(\beta \in (1/2, \bar{\beta}]\) with \(\bar{\beta}>0\) and \(M>0\). Denote \[\zeta = \frac{2\bar\beta}{2\bar\beta +1} \in [1/2,1).\]
Assume that the conditions of Theorem~\ref{thm:minmax-pol} hold, and \(K_n =O(n^{\delta})\) as \(n \to \infty\) for some \(\delta \in (0, 1-\zeta)\). Then  for any \(\ell>\varkappa^2 M \E[N]
h\), 
\begin{multline}\label{adaptive_bound}
\max_{x\in \R}\E\left[\left(\widehat{p}_{\xi,\hat{k}}(x)-p_\xi(x)\right)^2\Bigr|\A_n\right] \\
\lesssim \min_{k=1.. K_n}
\biggl( 
\Bigl(\int_{|u|>U_n} \left|\phi_{\xi}(u)\right|\,du\Bigr)^2 + V(k) \biggr)
 + n^{-\zeta}.
\end{multline}
\end{theorem} 

As in the paper by Comte et al. (\citeyear{Comte_CPP}), adaptive choice of \(k\) realises the best compromise between
the squared bias and the variance (the first and the second terms inside the min). From Theorem~\ref{thm:minmax-pol} (i)  it follows that the minimum is of order \(n^{-2\beta/(2\beta+1)}\), which is larger than \(n^{-\zeta}\) under our choice of the parameter \(\zeta.\) In other words, our adaptive choice of the parameter \(k\) ensures that the MSE differs from the minimal possible MSE by a negligible term. 
\begin{remark}\label{rem}
 Theorem~\ref{thm25} yields that the minimal value of \(\ell, \) for which the adaptive procedure leads to the minimax optimal convergence rates, is equal to   \(\ell>\varkappa^2 M \E[N]
h\). Note that since the distribution of \(N\) is assumed to be known, \(\E[N]\) is also known. Moreover, the choice of the parameter \(\kappa\) depends only on the distribution of \(N\) in all examples, considered in this paper, see Lemma~\ref{cor-sym} and examples in Section~\ref{sec22}. Finally, let us show that the parameter \(M\) can be (roughly) estimated from the data. From the assumption \(p_\xi \in \CC(\beta, M)\) it follows that the density \(p_X\) of \(X\) also belongs to this class, 
because 
\begin{eqnarray*}
|\phi_X(u) | \leq \sum_{k=1}^\infty\mathtt{p}_k |\phi_\xi(u)|^k \leq|\phi_\xi(u)|,
\end{eqnarray*}
and therefore
\begin{eqnarray*}
\sup_{u\in \R} \Bigl\{(1+|u|)^{1+\beta}|\phi_X(u)|\Bigr\}
\leq
\sup_{u\in \R} \Bigl\{(1+|u|)^{1+\beta}|\phi_\xi(u)| \Bigr\}\leq M.
\end{eqnarray*}
Assuming that \(\beta\leq\bar{\beta}\) (see Theorem~\ref{thm25}), we estimate \(M\) by 
\begin{eqnarray}\label{M_est}
\widehat{M} = \sup_{u\in \R} \Bigl\{(1+|u|)^{1+\bar\beta}|\widehat\phi_X(u)|\Bigr\},
\end{eqnarray}
where in practice the supremum can be changed to the maximum on a equidistant grid. With no doubt, the  knowledge of \(\bar{\beta}\) is an additional assumption that can be considered as a limitation of this approach from both theoretical and practical points of view. 
\end{remark}
\section{Numerical examples}\label{sec3}


Let us consider the following three cases of the law of \(N\): the one supported on two points, the geometric distribution starting from 1  or the shifted Poisson. As for \(\xi\), in what follows we consider the case when \(\xi\) has either the Laplace distribution with zero mean and scale equal to one, or the standard normal distribution. In the first case, since \(\phi_{\xi}(u)=(1+u^2)^{-1}\), we have that \(p_{\xi}\in\mathcal{C}(\beta,M)\) with \(\beta=1\) and \(M=1\), while in the second one, as \(\phi_{\xi}(u)=e^{-u^2/2}\), we get that \(p_{\xi}\in\mathcal{E}(\gamma,M)\) with \(c_{\gamma}=1/2\), \(\gamma=2\) and \(M=1\). Observing that both the standard Laplace and standard normal distributions are symmetric about zero, we conclude by Lemma~\ref{cor-sym} that in both cases the conditions of Theorem~\ref{thm:minmax-pol} are satisfied.  

\begin{figure}[t]
\includegraphics[width=1\linewidth]{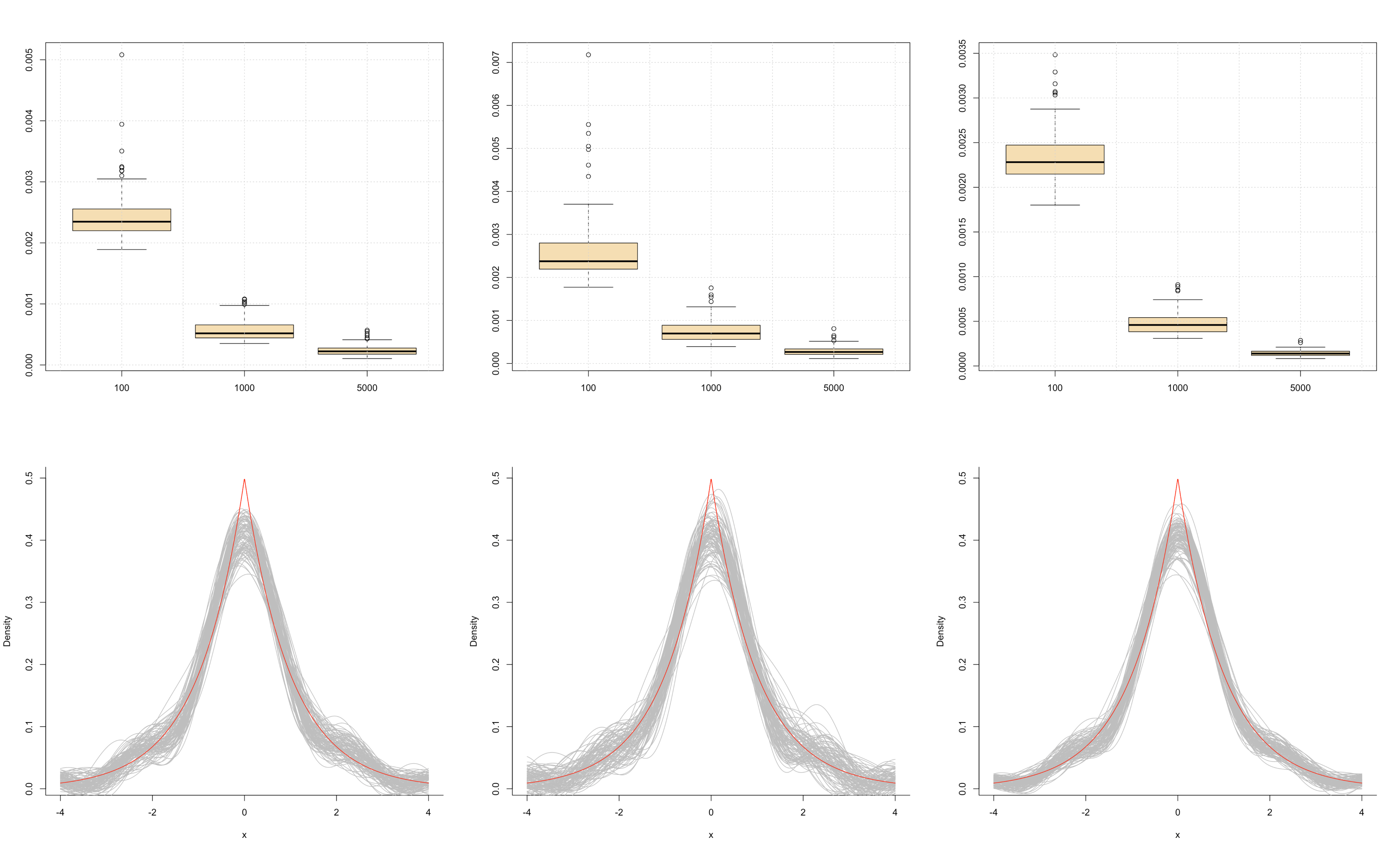}
	\caption{The errors~\eqref{error_def} of the estimator~\eqref{est2} (top) and estimated (grey) and real (red) densities of \(\xi\) (bottom) for \(N\) having the two point (left), geometric (middle) and shifted Poisson (right) distributions, and \(\xi\) following the Laplace distribution.}
	\label{fig:laplace}
\end{figure}

\begin{figure}[t]
\includegraphics[width=1\linewidth]{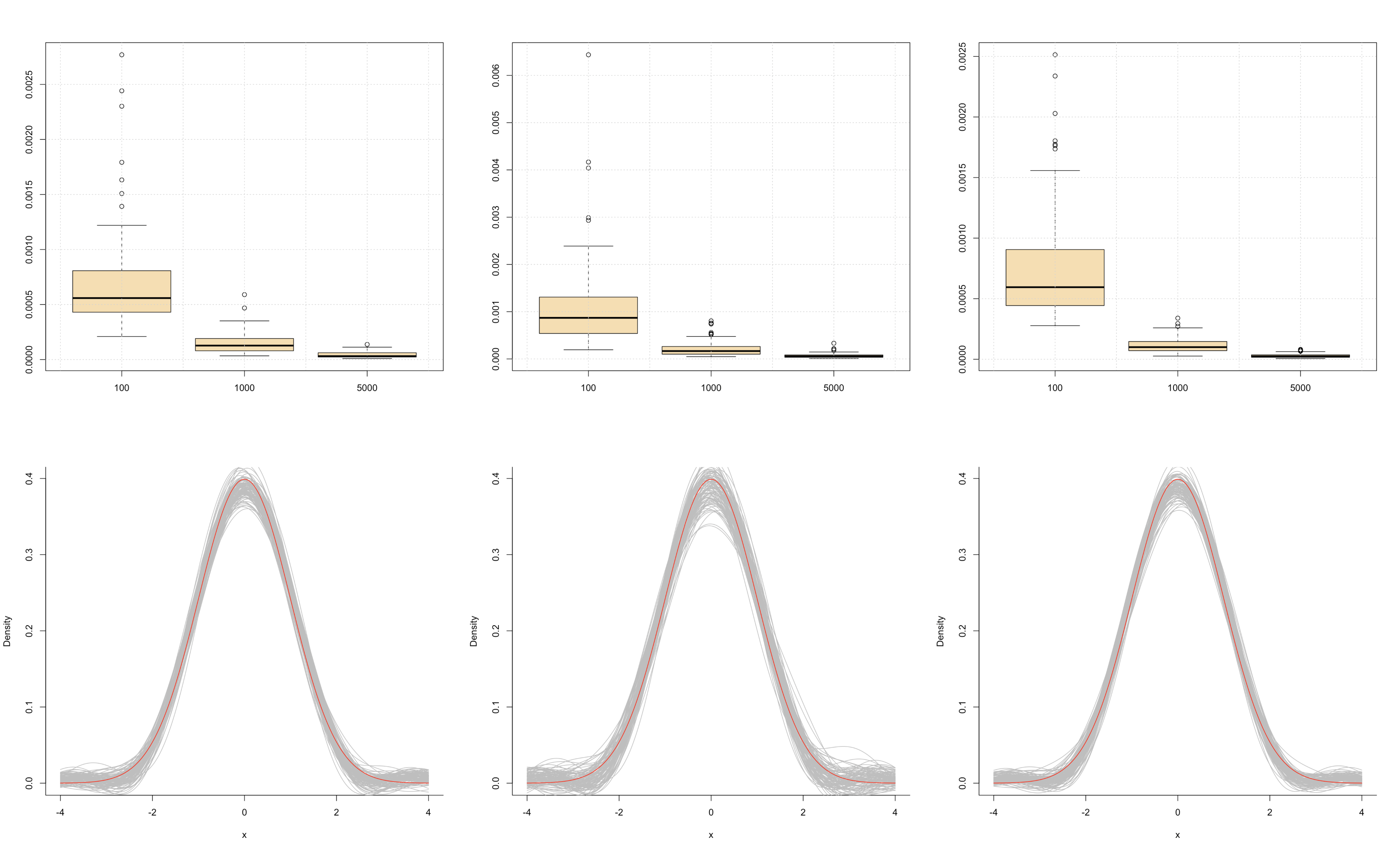}
	\caption{The errors~\eqref{error_def} of the estimator~\eqref{est2} (top) and estimated (grey) and real (red) densities of \(\xi\) (bottom) for \(N\) having the two point (left), geometric (middle) and shifted Poisson (right) distributions, and \(\xi\) following the standard normal law.}
	\label{fig:norm}
\end{figure}

For the simulation study, we fix the parameter of the law of \(N\) as \(p=0.3\) in case of the two-point and geometric distribution and \(\lambda=1\) in case of the Poisson law. For all the considered cases --- three with respect to the distribution of \(N\) and two with respect to the law of \(\xi_1\) --- we aim at analysing the behaviour of the proposed estimator for different values of \(n\in\{100,1000,5000\}\). To this end, we simulate 100 samples for every \(n\) and for each of those samples compute the error
\begin{equation}
\label{error_def}
\frac{1}{J}\sum\limits_{j=1} ^J \left(\widehat{p}_{\xi}(x_j)-p_{\xi}(x_j)\right)^2,
\end{equation}
where \(\{x_j\}_{1\leq j\leq J}\) is an equidistant grid of \(J=1000\) points from \(-4\) to \(4\), \(\widehat{p}_{\xi}\) is the proposed estimator~\eqref{est2}, and \(p_{\xi}\) is the density of either the Laplace or the standard normal distribution. As suggested by Theorem~\ref{thm:minmax-pol}, in case when \(\xi_1\) is normally distributed the truncating sequence is chosen as 
\[
U_n=((\log n )/(2c_\gamma))^{1/\gamma}.
\] 
As for the case of the Laplace distribution, Theorem~\ref{thm:minmax-pol} suggests that \(U_n\) should be of order \(n^{1/(1+2\beta)}=n^{1/3}\). To speed up the numerical computations and ensure convergence of the integrals on finite data samples, we multiply this value by a normalising constant \(c=1/3\), taking \(U_n=cn^{1/3}\).

The first row of Figures~\ref{fig:laplace} and~\ref{fig:norm} represents the boxplots of errors~\eqref{error_def} for the cases of Laplace and normal \(\xi\), respectively, with different distributions of \(N\) and sample sizes \(n\). It can be observed that in all the considered cases the values of errors decline with the growth of sample size and are reasonably small, not exceeding 0.008 even for \(n=100\). Also, it can be seen that the case when \(\xi\) follows the normal distribution generally leads to smaller errors than the Laplace one. This observation is further supported by the second row of Figures~\ref{fig:laplace} and~\ref{fig:norm}, which depicts the true densities of \(\xi\) superimposed with their estimates obtained for 100 samples of size \(n=1000\), as the estimates for the normal law appear to be much more stable than those for the Laplace distribution. It is worth mentioning that these results are fully coherent with our theoretical findings, yielding the faster rate of convergence in the  case of the class \(\mathcal{E}(\gamma,M)\). All in all, we conclude that the proposed estimation method allows to obtain favourable results for the considered examples, and hence can successfully be employed for the problems of this kind.

Now we would like to analyse the behaviour of the adaptive estimation procedure for the truncation parameter \(U_n\). To this end, let us consider the case when \(N\) has a shifted Poisson distribution, and \(\xi_1\) follows the standard Laplace law. Recall that \(p_\xi \in \mathcal{C}(1,1),\) and therefore we can take \(\bar{\beta}=1.\) In this example, the intensity parameter of \(N\) is fixed as \(\lambda=0.1\). Since in this case \(\mathtt{c}_\lambda\approx 9.508>1\), as was discussed in Example~\ref{exm:poisson}, the condition~\eqref{eq:ex-ass-h} holds with some \(1<\rho_0<\mathtt{c}_\lambda\) and \(\varkappa\) defined by~\eqref{varkappa0}. Taking \(\rho_0=5\), we get that \(\varkappa\approx 2.218\). Let us emphasise that Theorem~\ref{thm25} is valid for any \(h,\) and for this study we take \(h=1\).  Finally, since \(\E[N]\approx 1.051\), we conclude that~\eqref{adaptive_bound} holds with \(\zeta=2/3\) for any \(\ell>\kappa^2 M\E[N]h\approx 5.17\), provided that \(K_n=O(n^{\delta})\) with some \(\delta\in(0,1-\zeta)\). In the numerical example we take \(K_n=\lfloor n^{1/4} \rfloor\).

\begin{figure}[t]
\includegraphics[width=1\linewidth]{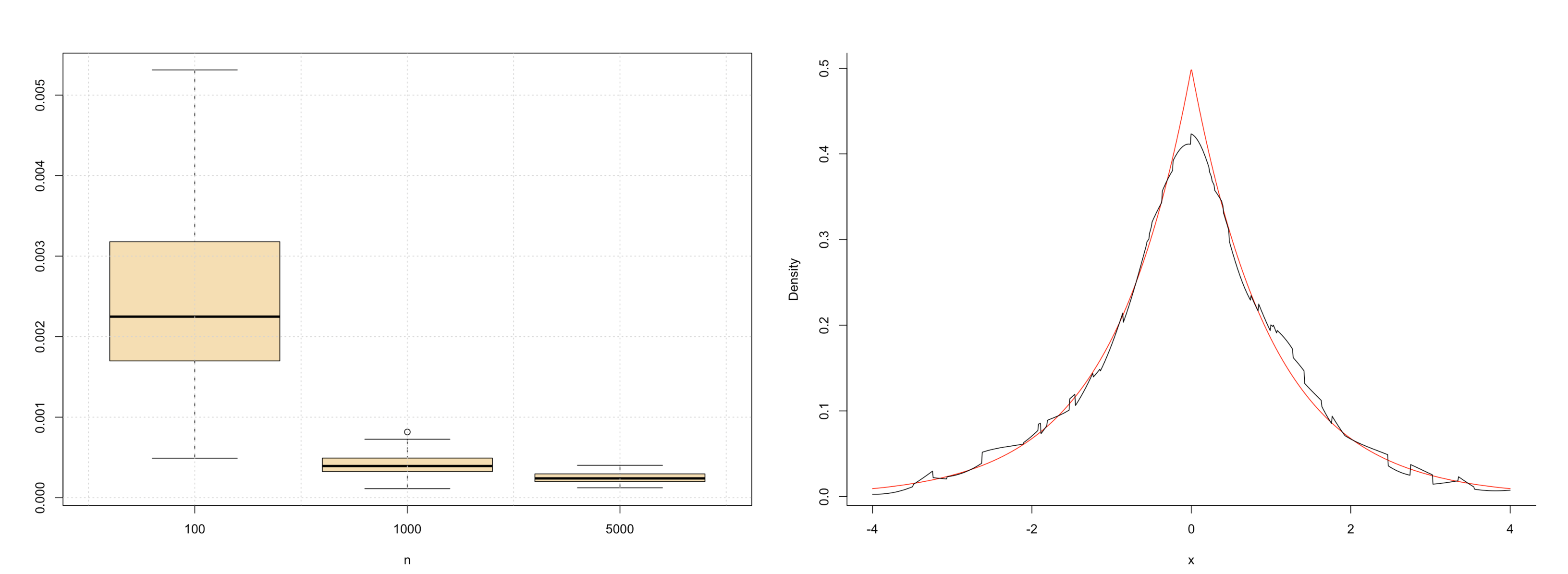}
	\caption{Left: the errors~\eqref{error_def} of the estimator~\eqref{est2} with \(U_n\) chosen by the adaptive procedure. Right: the true (red) density and its estimate (black) based on a sample of size \(n=1000\). }
	\label{fig:adaptive_lap}
\end{figure}

Figure~\ref{fig:adaptive_lap} shows the results of applying the adaptive estimation procedure for the density estimation at the points \(x\), which are taken on an equidistant grid with \(1000\) values from \(-4\) to \(4\). The first plot indicates the errors~\eqref{error_def} of the proposed estimator~\eqref{est2} with \(U_n\) being estimated adaptively based on samples of size \(n=100\), \(1000\) and \(5000\). As before, the errors were computed based on 100 samples of each size. It can be seen that the empirical errors of the estimator decline with the growth of sample size, and have values comparable to those obtained using Theorem~\ref{thm:minmax-pol}. The second plot in Figure~\ref{fig:adaptive_lap} demonstrates the estimated density constructed based on a sample of size \(n=1000\) alongside the true one. The curves are rather close to each other, even though the density estimate appears to be less stable compared to the one with \(U_n\) chosen based on Theorem~\ref{thm:minmax-pol}.

\section{Real data example}
\label{sec:RD}
Now we would like to demonstrate the efficiency of the proposed estimation procedure for the real-life analysis. To this end, let us consider the dataset \texttt{freMTPL}, comprising the data on motor third-part liability policies for different regions of France and available in the \texttt{CASdatasets} package in \texttt{R}. More precisely, we are working with the two sub-datasets: \texttt{FreMTPL2freq}, indicating the number of claims for a given policy number, and \texttt{FreMTPL2sev}, representing the claim amounts. Since for some policy numbers either the number or the amount of claims are not available, we only leave those common for both datasets. In addition, as our estimation method assumes that \(N\) is strictly positive, we only consider the observations for which the number of claims is greater than or equal to one. In this study, we are focusing on the data for the Burgundy region, constituting the total of 345 observations. As the values for the claim amounts are rather large, in what follows we aim at modelling their logarithms.
\begin{figure}[t]
\includegraphics[width=1\linewidth]{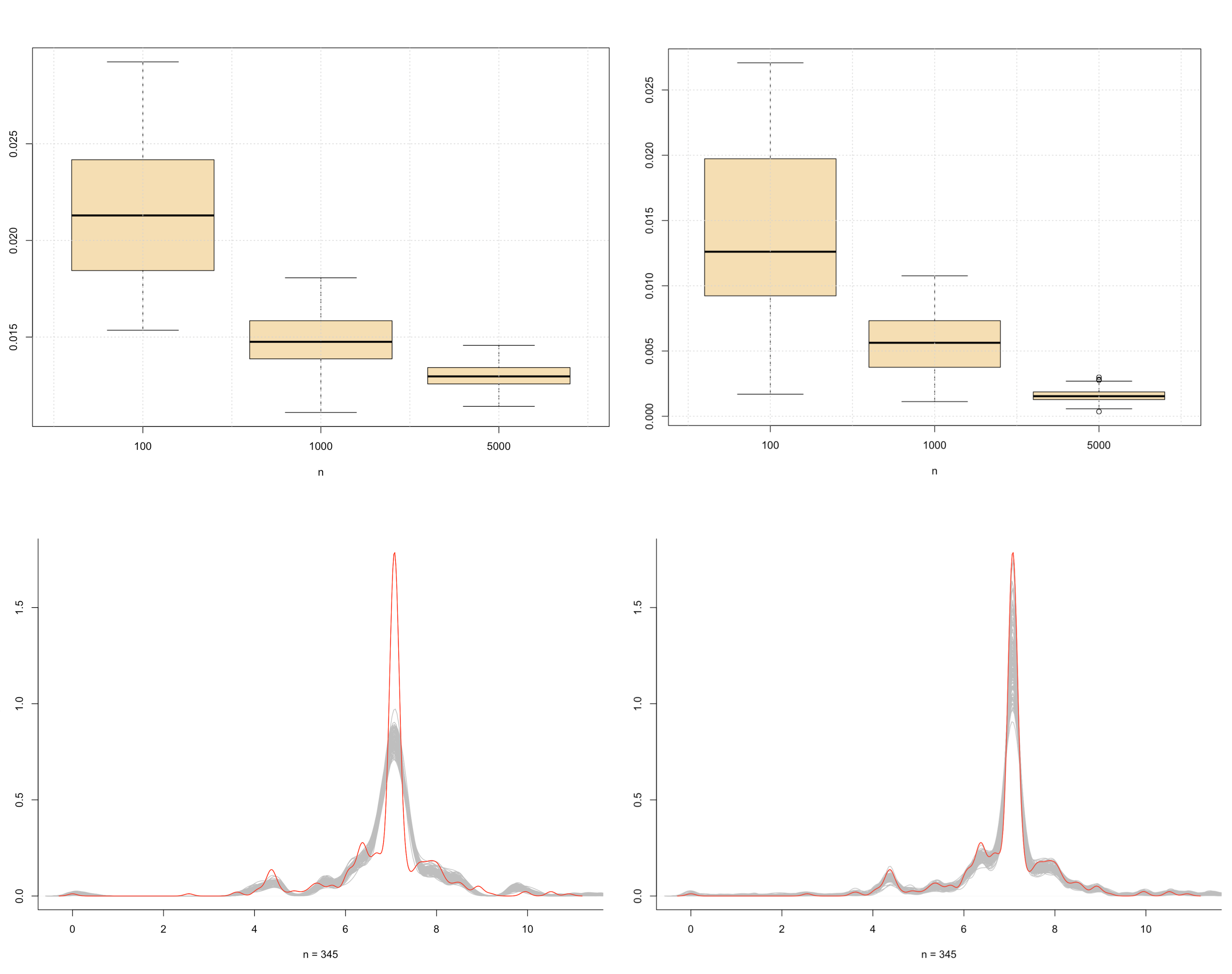}
	\caption{Top: the errors~\eqref{err_RD} obtained for the estimator~\eqref{est2} with \(U_n\) chosen by the adaptive procedure (left) and \(U_n=54\) chosen by the grid search  (right) based on 100 simulation runs. Bottom: the kernel density estimates for the true (red) and simulated (grey) cumulative claim amounts with \(U_n\) estimated adaptively (left) and \(U_n=54\) (right).}
	\label{fig:RD}
\end{figure}

The preliminary step for applying the estimation technique described in Section~\ref{sec:main} is determining the distribution of \(N\). Since for the considered data the maximal number of claims per policy is equal to two, we are fitting the two-point distribution as in Example~\ref{exm:two-points} with parameter \(p=\P\{N=1\}\) being estimated as the proportion of observations for which the number of claims is equal to one. The resulting value is equal to \(\widehat{p}\approx 0.9014\). Hence, in what follows it is assumed that \(N\) has the two-point distribution with this parameter.

Now we turn towards recovering the law of each individual claim \(\xi_1\). The key step in using the estimator~\eqref{est2} is choosing the cutoff parameter \(U_n\). For this one can apply the adaptive estimation procedure presented in Subsection~\ref{sec23}. From Example~\ref{exm:two-points} it is known that the condition~\eqref{eq:ex-ass-h} holds with \(1<\rho_0<\rho^*\), where  \(\rho^*\) can be estimated by \(\widehat{\rho}^*=\widehat{p}^2/(4(1-\widehat{p}))\approx 2.061>1\).  Choosing \(\rho_0=1\), we get \(\varkappa\approx 1.546\), see~\eqref{varkappa0}. Next, assuming that \(p_\xi \in\mathcal{C}(\beta,M)\) with some \(\beta\in(1/2, \bar\beta]\), where \(\bar\beta=2\),  we can estimate the  parameter \(M\)  by \(\widehat{M}\approx 44.584\) using~\eqref{M_est}. As in the simulated example, we take \(h=1\). Since \(\E[N]\approx 1.099\), we get by Theorem~\ref{thm25} that~\eqref{adaptive_bound} holds for any \(\ell>117.056\) with \(\zeta=2\bar{\beta}/(2\bar{\beta}+1)\in[1/2,1)\), provided that \(K_n=O(n^\delta)\) for \(\delta\in(0,1-\zeta)\). Our empirical study shows that the best performance is achieved with the choice \(K_n=2 \lfloor n^{1/4} \rfloor\). 

The left-hand side of Figure~\ref{fig:RD} demonstrates the results obtained for the proposed estimator with \(U_n\) chosen by the adaptive procedure. The top left plot indicates the boxplots of errors~\eqref{err_RD} of the proposed estimator for this cutoff parameter obtained for 100 simulation runs across various sample sizes. It can be seen that the values of errors are rather small, and that they decrease as the number of observations grows. The bottom left plot represents the kernel density estimates of the true and 100 simulated cumulative logarithms of claim amounts. In the latter case, the sample size is taken equal to \(n=1000\). It can be seen that the density curves are very close to each other.

Since the adaptive procedure for the choice of \(U_n\) requires the computation of~\eqref{k_hat} for every point at which the density is evaluated, it might be time-consuming if both the number of such points and the sample size are large. In that case, one can employ the following procedure. Let the values of \(U_n\) be taken on a grid from \(50\) to \(100\) with a step equal to \(1\) and calculate the estimator \(\widehat{p}_{\xi}^{U_n}\) defined by~\eqref{est2} for each \(U_n\). Then simulate \(25\) samples of size \(n=1000\) from~\eqref{randomsum_main} with \(\xi_1\) having the density \(\widehat{p}_{\xi}^{U_n}\) and \(N\) having the two-point law, and for each of these samples calculate the error 
\begin{equation}\label{err_RD}
err_{U_n,k}:=\frac{1}{J}\sum\limits_{j=1} ^J \left(p_X(x_j)-\widehat{p}_X^{k,U_n}(x_j)\right)^2,
\end{equation}
where \(p_X\) is the kernel density estimate for the true cumulative (logarithmic) claim amounts, \(\widehat{p}_X^{k,U_n}\) is the kernel density estimate obtained for a \(k\)-th simulated sample from~\eqref{randomsum_main}, \(1\leq k\leq 25\), with \(\xi_1\) having the density \(\widehat{p}_{\xi}^{U_n}\), and \(\{x_j\}_{1\leq j\leq J}\) are the values of an equidistant grid from \(x_1=-0.296\) to \(x_J=11.2\), \(J=1000\). The value of \(U_n\) corresponding to the smallest average error \(\frac{1}{25}\sum_{k=1} ^{25} err_{U_n,k}\) can be considered optimal. 

In our case, the procedure above leads to the value \(U_n=54\). The right-hand side of Figure~\ref{fig:RD} demonstrates the results obtained for the estimator~\eqref{est2} with this truncation parameter. Similar to before, the boxplots of errors~\eqref{err_RD} indicate that the estimator becomes more accurate as the sample size grows, and the values of errors are a bit smaller than those obtained in the previous case. The densities of simulated logarithms of claim amounts presented in the bottom right plot, again constructed based on 100 samples of size \(n=1000\), are very close to the density of the true ones, and it can be seen that the estimates reflect all the key characteristics present in the density of the original data.

We conclude that both estimation methods (adaptive approach and grid search) allow to successfully recover the density of individual claim payments, and can therefore be efficiently applied for modelling the claim amounts in insurance. Not surprisingly, the results of the grid search procedure are slightly better than those of the adaptive approach.

\section{The case of deterministic \(N\)}\label{sec4}


A natural question, which is closely related to the considered problem, is how to infer the distribution
of a random variable \(\xi\) from the distribution of the sum of its independent copies \(\xi_1+\xi_2+\ldots+\xi_m\) for a specified \(m\in \mathbb{N}\). This task essentially boils down to the intricate process of reconstructing the characteristic function \(\phi_\xi\) from its powers, which is recognized as an inherently difficult problem due to its ill-posed nature, see Gorenflo and Hofmann (\citeyear{GH1994}). An illustrative example can demonstrate this complexity: it has been established that there exists a distribution function \(F\) such that the distribution of the sum of any number of independent random variables adhering to the law \(F\) does not uniquely determine \(F\). To elucidate this point, one might consider a distribution \(F\) defined by the density function:
\[
F'(x)=p_{\xi}(x)=\frac{1-\cos(x)}{\pi x^2}(1-\cos(2x)), \qquad 
x \in \R.
\]
Then by defining for any \(m,\)  
\begin{eqnarray*}
G'_m(x)=\frac{1-\cos(x)}{\pi x^2}\left(1-\cos\left(2x+\frac{2\pi}{m}\right)\right)
\end{eqnarray*}
it is easy to show that 
\begin{eqnarray*}
\underbrace{F\star\ldots\star F}_{m}=\underbrace{G_m\star\ldots\star G_m}_{m}.
\end{eqnarray*}
This example underscores the nuanced challenges encountered in this problem, highlighting the need for careful consideration and more restrictive assumptions.
In this respect, the following result can be proved. Let us assume that the characteristic function \(\phi_\xi\) doesn't vanish on \(\mathbb{R}\), and therefore \((\phi_X)^{1/m}\) is well defined. Then an estimate  of $p_\xi$ can be defined as
$$
\widehat p_{\xi}(x)=\frac{1}{2\pi} \int_{-U_n}^{U_n} e^{-\i\omega x}(\widehat\phi_X(\omega))^{1/m}\,d\omega.
$$
Since $(\widehat\phi_X)^{1/m}$ is well defined only on the interval where $\widehat\phi_X\neq 0$  we will 
consider  the event 
\[
\B_n(\varkappa):=\left\{\sup_{\omega\in [-U_n,U_n]}\left|\frac{\widehat\phi_X(\omega)-\phi_X(\omega)}{\phi_X(\omega)}\right|\leq \varkappa\right\}
\]  
for some  \(\varkappa >  0\). As before, we will use the notation \(\B_n:=\B_n(\varkappa)\) for shortness. The following result holds.
\begin{theorem}\label{thm:gen-bound-power}
Suppose that \(p_\xi \in \mathcal{C}(\beta,M)\) with 
 some \(\beta>0\) and \(M>0.\) Assume that   \(\inf_{n>n_0} \P(\B_n)>0\) for some \(\kappa>0\). Then it holds for any \(x\in \mathbb{R},\) 
\begin{eqnarray}
\label{thm2bound-power}
\E\left[\left|\widehat{p}_\xi(x)-p_\xi(x)\right|^2\Bigr|\B_n\right] 
 \lesssim U_n^{-2\beta}+\frac{U_{n}^{1+(\beta+1)(2m-1)} }{n},
\end{eqnarray}
provided \(U_{n}^{(\beta+1)m}\,n^{-1/2}=o(1),\) \(n\to \infty.\)
Here \(\lesssim\) stands for inequality up to an absolute constant not depending on \(n.\) \end{theorem}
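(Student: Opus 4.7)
My plan is a standard bias--variance decomposition for inverse Fourier estimators, with a weighted Cauchy--Schwarz refinement to produce the stated exponent on the variance. Since $\P(N=m)=1$ we have the identity $\phi_X=\phi_\xi^m$, and the lower bound $|\phi_\xi(u)|\geq M^{-1}(1+|u|)^{-\alpha}$ prevents $\phi_\xi$ from vanishing, so $\phi_\xi$ is the principal $m$th root of $\phi_X$. Writing
\[
\widehat{p}_\xi(x)-p_\xi(x) \;=\; \frac{1}{2\pi}\int_{-U_n}^{U_n} e^{-\i\omega x}\bigl[(\widehat\phi_X(\omega))^{1/m} - \phi_\xi(\omega)\bigr]d\omega \;-\; \frac{1}{2\pi}\int_{|\omega|>U_n}e^{-\i\omega x}\phi_\xi(\omega)\,d\omega,
\]
I bound the deterministic tail by $M\int_{|\omega|>U_n}(1+|\omega|)^{-\alpha}d\omega\lesssim U_n^{1-\alpha}$ (using $\alpha>1$), which produces the $U_n^{2(1-\alpha)}$ term in \eqref{thm2bound-power}.

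For the stochastic integral I work on $\B_n(\varkappa)$ and set $\eta(\omega):=(\widehat\phi_X(\omega)-\phi_X(\omega))/\phi_X(\omega)$, so $|\eta|\leq\varkappa$. Factoring
\[
(\widehat\phi_X(\omega))^{1/m} - \phi_\xi(\omega) \;=\; \phi_\xi(\omega)\bigl[(1+\eta(\omega))^{1/m}-1\bigr]
\]
and using the Taylor bound $|(1+\eta)^{1/m}-1|\leq C_{m,\varkappa}|\eta|$ valid for $|\eta|\leq\varkappa$, the integrand is dominated by $C_{m,\varkappa}|\phi_\xi(\omega)||\eta(\omega)|$. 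The key step is that, rather than the naive Cauchy--Schwarz $(\int|\phi_\xi\eta|\,d\omega)^2\leq 2U_n\int|\phi_\xi\eta|^2\,d\omega$ which would introduce an unnecessary factor $U_n$, I apply Cauchy--Schwarz with weight $|\phi_\xi|$:
\[
\biggl(\int_{-U_n}^{U_n}|\phi_\xi||\eta|\,d\omega\biggr)^{2} \;\leq\; \biggl(\int_{-U_n}^{U_n}|\phi_\xi|\,d\omega\biggr)\biggl(\int_{-U_n}^{U_n}|\phi_\xi||\eta|^2\,d\omega\biggr).
\]
The first factor is finite because $\alpha>1$ makes $\phi_\xi\in L^1(\R)$. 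For the second, $\E|\eta(\omega)|^2\leq n^{-1}|\phi_X(\omega)|^{-2} = n^{-1}|\phi_\xi(\omega)|^{-2m}$, and the upper envelope on $|\phi_\xi|$ yields
\[
\int_{-U_n}^{U_n}|\phi_\xi(\omega)|\E|\eta(\omega)|^2\,d\omega \;\lesssim\; \frac{1}{n}\int_{-U_n}^{U_n}(1+|\omega|)^{\alpha(2m-1)}\,d\omega \;\lesssim\; \frac{U_n^{1+\alpha(2m-1)}}{n}.
\]
Conditioning on $\B_n$ costs only the bounded factor $\P(\B_n)^{-1}$, so this delivers the second term of \eqref{thm2bound-power}.

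The principal technical obstacle will be controlling the quadratic Taylor remainder $R(\eta)=(1+\eta)^{1/m}-1-\eta/m$, whose magnitude is $O(|\eta|^2)$. Running the same weighted Cauchy--Schwarz and using $\E|\widehat\phi_X-\phi_X|^4\lesssim n^{-2}$ together with $\E|\eta|^4\leq n^{-2}|\phi_\xi|^{-4m}$ produces a remainder contribution of order $n^{-2}U_n^{1+\alpha(4m-1)}$. The ratio of this to the leading $n^{-1}U_n^{1+\alpha(2m-1)}$ is $n^{-1}U_n^{2\alpha m}=(U_n^{\alpha m}n^{-1/2})^2$, which is precisely $o(1)$ by the hypothesis $U_n^{\alpha m}n^{-1/2}=o(1)$, so the remainder is absorbed. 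A minor but necessary subsidiary point is the choice of a consistent branch of $(\widehat\phi_X)^{1/m}$: on $\B_n$ the ratio $\widehat\phi_X/\phi_X$ lies in the disk $\{|z-1|\leq\varkappa\}$ around $1$, so the principal branch is well defined uniformly on $[-U_n,U_n]$.
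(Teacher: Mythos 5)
Your proof is correct and arrives at the stated bound, but by a genuinely different route from the paper. The paper's proof runs the same machinery as Theorem~\ref{thm:gen-bound}: a second-order Taylor expansion of $z\mapsto z^{1/m}$ via Lemma~\ref{lem:taylor}, a bias term $I_1$ controlled through the second derivative, and a variance split $S_1+2S_2+S_3$ in which $S_1$ is bounded using the covariance kernel $\cov(\widehat\phi_X(u),\widehat\phi_X(v))=n^{-1}(\phi_X(u-v)-\phi_X(u)\overline{\phi_X(v)})$ and $S_2,S_3$ are absorbed using $U_n^{\alpha m}n^{-1/2}=o(1)$. You instead factor out $\phi_\xi$, use only the first-order Lipschitz bound $|(1+\eta)^{1/m}-1|\le C_{m,\varkappa}|\eta|$ on the disk $|\eta|\le\varkappa$, and control the conditional second moment of the entire stochastic integral in one stroke with the weighted Cauchy--Schwarz inequality; the weight $|\phi_\xi|$ is exactly what turns $\E|\eta|^2\le n^{-1}|\phi_\xi|^{-2m}$ into $n^{-1}U_n^{1+\alpha(2m-1)}$, the exponent in \eqref{thm2bound-power}. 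What your route buys is economy: the main estimate does not actually use the hypothesis $U_n^{\alpha m}n^{-1/2}=o(1)$ at all (your quadratic-remainder paragraph is redundant, since the Lipschitz bound already covers the full difference, not just the linear term), whereas the paper needs that hypothesis to tame $S_2$, $S_3$ and $|I_1|^2$. What the paper's route buys is a slightly sharper variance exponent, $U_n^{1+2\alpha(m-1)}/n$ for $S_1$ versus your $U_n^{1+\alpha(2m-1)}/n$, obtained from the off-diagonal decay of the covariance kernel --- though this gain is not reflected in the theorem statement, so nothing is lost. Two cosmetic points: when you bound $|\phi_\xi|^{1-2m}$ you need the \emph{lower} envelope $|\phi_\xi|\ge M^{-1}(1+|u|)^{-\alpha}$ (the exponent is negative), not the upper one; and your branch argument, like the paper's bound $|\phi_{X,\tau}|>(1-\varkappa)|\phi_X|$, implicitly requires $\varkappa<1$, a restriction shared with (and left implicit in) the original.
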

\begin{corollary}\label{cor42}
By choosing \(U_{n}=n^{1/(\beta+2m(\beta+1))}\),
we derive
\begin{eqnarray*}
\E\left[\left|\widehat{p}_\xi(x)-p_\xi(x)\right|^2\Bigr|\B_n\right]\lesssim 
n^{-2\beta/(\beta+2m(\beta+1))},
\end{eqnarray*}
where \(\P(\B_n^c)\leq  \left(\sqrt{n}U_n\right)^{-2}\).
\end{corollary}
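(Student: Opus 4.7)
The plan is to decompose \(\widehat p_\xi(x)-p_\xi(x)\) into a deterministic truncation bias and a stochastic piece, to control the latter via a Lipschitz estimate for the map \(z\mapsto z^{1/m}\) on the event \(\B_n\), and to close the variance bound with a weighted Cauchy--Schwarz inequality. Writing
\[
\widehat p_\xi(x) - p_\xi(x) = -\frac{1}{2\pi}\int_{|\omega|>U_n} e^{-\i\omega x}\phi_\xi(\omega)\,d\omega + \frac{1}{2\pi}\int_{-U_n}^{U_n} e^{-\i\omega x}\bigl[(\widehat\phi_X(\omega))^{1/m} - \phi_\xi(\omega)\bigr]\,d\omega
\]
and denoting the second integral by \(S(x)\), the first (bias) integral is bounded in absolute value by \(\int_{U_n}^{\infty}M(1+\omega)^{-\alpha}\,d\omega \lesssim U_n^{1-\alpha}\), so its square contributes \(\lesssim U_n^{2(1-\alpha)}\), matching the first term of~\eqref{thm2bound-power}.

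On \(\B_n(\varkappa)\) with some fixed \(\varkappa<1\), pointwise in \(\omega\) we have \(\widehat\phi_X(\omega) = \phi_X(\omega)(1+z(\omega))\) with \(|z(\omega)|\le\varkappa\), so the principal branch of \((1+z)^{1/m}\) is Lipschitz on the closed disk \(\{|z|\le\varkappa\}\). Combined with \(|\phi_X|=|\phi_\xi|^m\), this yields
\[
\bigl|(\widehat\phi_X(\omega))^{1/m} - \phi_\xi(\omega)\bigr| = |\phi_\xi(\omega)|\,\bigl|(1+z(\omega))^{1/m}-1\bigr| \le C_\varkappa\,\frac{|\widehat\phi_X(\omega)-\phi_X(\omega)|}{|\phi_\xi(\omega)|^{m-1}}
\]
and hence \(|S(x)|\,\mathbf{1}_{\B_n}\le \tfrac{C_\varkappa}{2\pi}\int_{-U_n}^{U_n}|\widehat\phi_X-\phi_X|\,|\phi_\xi|^{-(m-1)}\,d\omega\).

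The key step will be Cauchy--Schwarz applied to the split \(|\phi_\xi|^{-(2m-1)/2}\cdot|\widehat\phi_X-\phi_X||\phi_\xi|^{1/2}\), which gives
\[
|S(x)|^2\mathbf{1}_{\B_n} \le C_\varkappa^2 \biggl(\int_{-U_n}^{U_n}\frac{d\omega}{|\phi_\xi(\omega)|^{2m-1}}\biggr)\biggl(\int_{-U_n}^{U_n}|\widehat\phi_X(\omega)-\phi_X(\omega)|^2|\phi_\xi(\omega)|\,d\omega\biggr).
\]
The first factor is \(\lesssim U_n^{1+\alpha(2m-1)}\) from \(|\phi_\xi(\omega)|^{-1}\le M(1+|\omega|)^\alpha\); taking expectation of the second factor and using \(\E|\widehat\phi_X(\omega)-\phi_X(\omega)|^2\le 1/n\) together with \(\phi_\xi\in L^1(\R)\) (guaranteed by \(\alpha>1\)) gives \(\lesssim \|\phi_\xi\|_{L^1}/n\). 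Dividing by \(\P(\B_n)\) (uniformly positive by assumption) produces \(\E[|S(x)|^2\mid\B_n]\lesssim U_n^{1+\alpha(2m-1)}/n\); summing with the bias bound completes~\eqref{thm2bound-power}.

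The main obstacle is justifying the Lipschitz step: the assumption \(U_n^{\alpha m}n^{-1/2}=o(1)\) enters precisely to ensure that \(\B_n(\varkappa)\) has positive probability for some \(\varkappa<1\), because \(\|\widehat\phi_X-\phi_X\|_{[-U_n,U_n]}=O_{\P}(\sqrt{\log(nU_n)/n})\) (Proposition~3.3 of \cite{BR_Levy}) while \(\inf_{|\omega|\le U_n}|\phi_X(\omega)|\ge M^{-m}(1+U_n)^{-\alpha m}\); the same concentration estimate yields \(\P(\B_n^c)\lesssim (\sqrt n\,U_n)^{-2}\), as needed in Corollary~\ref{cor42}. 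That corollary then follows by balancing the bias \(U_n^{2(1-\alpha)}\) and the variance \(U_n^{1+\alpha(2m-1)}/n\), which produces \(U_n=n^{1/(\alpha-1+2m\alpha)}\) and the rate \(n^{-2(\alpha-1)/(\alpha-1+2m\alpha)}\).
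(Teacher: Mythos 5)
Your proposal is correct, and for the corollary proper (balancing the two terms of \eqref{thm2bound-power} to get \(U_n=n^{1/(\alpha-1+2m\alpha)}\) and the rate \(n^{-2(\alpha-1)/(\alpha-1+2m\alpha)}\), then bounding \(\P(\B_n^c)\) via Proposition 3.3 of the cited reference using \(\inf_{|\omega|\le U_n}|\phi_X(\omega)|\gtrsim U_n^{-\alpha m}\) and the condition \(U_n^{\alpha m}n^{-1/2}=o(1)\)) your argument coincides with the paper's. Where you genuinely diverge is in how you re-derive the bound \eqref{thm2bound-power} itself. The paper expands \(z\mapsto z^{1/m}\) to second order via the Taylor lemma, separates squared bias from conditional variance, and splits the variance into three pieces \(S_1,S_2,S_3\) (cross terms, fourth moments of \(\widehat\phi_X-\phi_X\), Cauchy--Schwarz on \(\A_n^c\)); the condition \(U_n^{\alpha m}n^{-1/2}=o(1)\) is then also needed to show that \(S_2\), \(S_3\) and the \((1-q_n)^{1/2}\)-term in \(S_1\) are of lower order. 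You instead use only a first-order Lipschitz bound for the principal branch of \((1+z)^{1/m}\) on \(\{|z|\le\varkappa\}\), valid pointwise on \(\B_n(\varkappa)\) with \(\varkappa<1\), followed by a single weighted Cauchy--Schwarz with the split \(|\phi_\xi|^{-(2m-1)/2}\cdot|\widehat\phi_X-\phi_X|\,|\phi_\xi|^{1/2}\); this bounds \(\E[|S(x)|^2\mid\B_n]\) directly (bias--variance separation is not needed) and reproduces exactly the term \(U_n^{1+\alpha(2m-1)}/n\). This is shorter and avoids fourth-moment computations; its only extra requirement is that \(\varkappa\) can be taken below \(1\), which for the corollary's choice of \(U_n\) is guaranteed by the same concentration estimate you invoke for \(\P(\B_n^c)\). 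Both routes give the same rate, so the proposal stands as a valid, somewhat more elementary alternative.
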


\section{Discussion} \label{sec_con}

In this paper, we study the problem of reconstructing the distribution of \(\xi\) from the observations of the compound models \(\xi_1+..+\xi_N\), where the distribution of \(N\) is assumed to be known.  This task, known as decompounding, was previously  considered mainly for the cases when the inverse function to the probability generating function of \(N\) is well defined, while the general case was examined only in the paper by B{\o}gsted and Pitts, \citeyear{Bogsted}. Our findings significantly improve the method and its theoretical basis, showing that the  rates of convergence achieve minimax optimality in some classes,  see Section~\ref{sec23}.  

In our study we used the assumption \(\P(N=1)>0\). If this assumption is violated, the inference is essentially more difficult, as was mentioned also in previous papers on the topic (see, e.g., discussion section in the paper by B{\o}gsted and Pitts, \citeyear{Bogsted}). 
  The latter case includes the scenario of deterministic \(N\), which we considered in Section~\ref{sec4}.  The direct comparison of Corollary~\ref{cor42} with Theorems~\ref{thm:minmax-pol} and \ref{thm:lb} leads to the conclusion, that the rates of convergence in  the deterministic case are slower than in our setup. The in-depth examination of the case \(\P(N=1)=0\) may be a promising direction for future exploration.
 
Note that the inverse problem of estimating \(N\)  from the observations of \(\xi_1+..+\xi_N\), provided that the distribution of \(\xi\) is known, was considered in our previous paper (Belomestny, Morozova, Panov, \citeyear{BPM2024}). The method presented there is based on a completely different technique, using the superposition of Mellin and Laplace transforms. Nevertheless, the study of the case the distribution of both \(\xi\) and \(N\) are not known, including the identifiability conditons,  is lacking in the existing literature. \newline

\section{Proofs}\label{sec5}

\subsection{Proof of Theorem~\ref{thm:gen-bound}}
It holds for all \(x\in\R\),
\begin{multline*}
\E\left[\widehat{p}_\xi(x)|\A_n\right]-p_\xi(x)\\ 
= \frac{1}{2\pi} \int_{|u|\leq U_n} e^{-\i ux}\, \E\left[e^{-\L^{-1}_N\left(\widehat{\phi}_X(u)\right)}-e^{-\L^{-1}_N\left(\phi_X(u)\right)}\big|\A_n\right]\,du 
\\
 - \frac{1}{2\pi}\int_{|u|>U_n} e^{-\i ux + \psi_\xi(u)}\,du
=  I_1 + I_2.
\end{multline*}
Applying Lemma~\ref{lem:taylor} to the function
\(\H(z)=\exp(-\L^{-1}_N(z))\) with \(k=2\), \(z=\widehat{\phi}_X(u)\), \(a=\phi_X(u)\),  we get 
\begin{multline*}
\E\Bigl[e^{-\L^{-1}_N(\widehat{\phi}_X(u))}-e^{-\L^{-1}_N(\phi_X(u))}|\A_n\Bigr] \\
=\E\Bigl[\E_{\tau}[g_{\tau}(\widehat{\phi}_X(u),\phi_X(u))](\widehat{\phi}_X(u)-\phi_X(u))^2\Big|\A_n\Bigr],
\end{multline*}
where
\[
g_{\tau}(\widehat{\phi}_X(u),\phi_X(u)) := (1-\tau)\H''\bigl(\phi_{X,\tau}(u)\bigr),\,\, \phi_{X,\tau}(u):=\phi_X(u)+\tau(\widehat{\phi}_X(u)-\phi_X(u)).
\]
On the event \(\A_n\), we have  \(|\H''( \phi_{X,\tau}(u))|\leq \varkappa\) for all \(\tau \in [0,1]\) and \(|u| \leq U_n\), and hence
\begin{eqnarray*}
|I_1|&\leq & \frac{\varkappa}{2\pi} \int_{|u|\leq U_n} \E[(\widehat{\phi}_X(u)-\phi_X(u))^2|\A_n]\,du
\\
&\lesssim &  \frac{\varkappa}{\P(\A_n)} \int_{|u|\leq U_n} \frac{1-|\phi_X(u)|^2}{n}\,du \lesssim \frac{\varkappa U_n}{n q_n}.
\end{eqnarray*}
Furthermore
\begin{eqnarray*}
\Var\left(\widehat{p}_\xi(x)|\A_n\right) &=& \Var\left(\int_{-U_n} ^{U_n} e^{-\i ux-\L^{-1}_N\left(\widehat{\phi}_X(u)\right)}\,du\Bigr|\A_n\right) \\
&=& \Var\left(\int_{-U_n} ^{U_n} e^{-\i ux} \H'\left(\phi_X(u)\right)\left(\widehat{\phi}_X(u)-\phi_X(u)\right)\,du\right. \\
&& \left. \hspace{1cm}+ \int_{-U_n} ^{U_n} e^{-\i ux} \E_{\tau}\left[g_{\tau}\left(\widehat{\phi}_X(u),\phi_X(u)\right)\right] \right.\\
&&\left.\hspace{3cm}\times\left(\widehat{\phi}_X(u)-\phi_X(u)\right)^2 \,du \Bigr|\A_n \right) \\
&=:& S_1 + 2S_2 + S_3,
\end{eqnarray*}
where 
\begin{eqnarray*}
S_1 &=& \Var\left(\int_{-U_n} ^{U_n} e^{-\i ux}\H'(\phi_X(u))\left(\widehat{\phi}_X(u)-\phi_X(u)\right)\,du \Bigr|\A_n\right),\\
S_2 &=& \int_{-U_n} ^{U_n} \int_{-U_n} ^{U_n} e^{-\i (u+v)x}\H'(\phi_X(u)) \cov\Bigl(\widehat{\phi}_X(u)-\phi_X(u), \Bigr.\\
&& \hspace{2cm}\Bigl.\E_{\tau}\left[g_{\tau}\left(\widehat{\phi}_X(v),\phi_X(v)\right)\right]\left(\widehat{\phi}_X(v)-\phi_X(v)\right)^2\Bigr|\A_n\Bigr)\,du\,dv, \\
S_3 &=& \Var\left(\int_{-U_n} ^{U_n} e^{-\i ux} \E_{\tau}\left[g_{\tau}\left(\widehat{\phi}_X(u),\phi_X(u)\right)\right] \left(\widehat{\phi}_X(u)-\phi_X(u)\right)^2 \,du \Bigr|\A_n\right).
\end{eqnarray*}
For \(S_1\) we have
 \begin{multline*}
 S_1 = \int_{-U_n} ^{U_n} \int_{-U_n} ^{U_n} e^{-\i(u-v)x}\H'(\phi_X(u))\overline{\H'(\phi_X(v))}\\
 	\times\frac{
 	\E\left[
 	\left(\widehat{\phi}_X(u)-\phi_X(u)\right)
 	\left(\overline{\widehat{\phi}_X(v)-\phi_X(v)}\right)
 	\mathbb{I}\{\A_n\}\right]
 	}{\P(\A_n)}\,du\,dv,
 \end{multline*}
where 
 \begin{multline*}
 	\E\left[
 	\left(\widehat{\phi}_X(u)-\phi_X(u)\right)
 	\left(\overline{\widehat{\phi}_X(v)-\phi_X(v)}\right)
 	\mathbb{I}\{\A_n\}\right] \\
 	= \cov\left(\widehat{\phi}_X(u),\widehat{\phi}_X(v)\right)-
 	\E\left[
 	\left(\widehat{\phi}_X(u)-\phi_X(u)\right)
 	\left(\overline{\widehat{\phi}_X(v)-\phi_X(v)}\right)
 	\mathbb{I}\{\A_n^c\}\right].
 \end{multline*}
For the first summand in the expression above we have
\[
\cov\left(\widehat{\phi}_X(u),\widehat{\phi}_X(v)\right) 
= \frac{1}{n}\left(\phi_X(u-v)-\phi_X(u)\overline{\phi_X(v)}\right).
\]
For the second summand, we will use the inequality 
\begin{eqnarray}\label{MZB}
\Vert \widehat{\phi}_X(u)-\phi_X(u)\Vert_{L^p}\lesssim n^{-1/2}, \qquad \forall \; u\in[-U_n,U_n], \quad \forall \; p\geq 1,\end{eqnarray}
which follows from the Marcinkiewicz - Zygmund - Burkholder inequality (see Lin and Bai,  \citeyear{LinBai}, Section~9.7),
\begin{eqnarray*}
\Vert \widehat{\phi}_X(u)-\phi_X(u)\Vert_{L^p}
= \biggl(
\E \Bigl|
\frac{1}{n} \sum_{k=1}^n 
\eta_k \Bigr|^p 
\biggr)^{1/p}
\leq \biggl( \frac{c_p}{n^p} \E\Bigl(
\sum_{k=1}^n \eta_k^2
\Bigr)^{p/2} \biggr)^{1/p} \leq 2 c_p^{1/p} n^{-1/2},
\end{eqnarray*}
where the random variables \(\eta_k:=e^{\i u X_k} - \E e^{\i u X_k}, k=1,..n,
\) are bounded by 2, and the constant 
\(c_p\) does not depend on the distribution of \(X\). 
The formula \eqref{MZB} together with the 
 the Cauchy-Schwarz inequality yield \begin{multline*}
	\E\left[
 	\left(\widehat{\phi}_X(u)-\phi_X(u)\right)
 	\left(\overline{\widehat{\phi}_X(v)-\phi_X(v)}\right)
 	\mathbb{I}\{\A_n^c\}\right]\\
 	\leq \left(\E\left[
 	\left|\widehat{\phi}_X(u)-\phi_X(u)\right|^{4}
 	\right]
 	\E\left[
 	\left|\widehat{\phi}_X(v)-\phi_X(v)\right|^{4}
 	\right]
 	\right)^{1/4}
 	(\P(\A_n^c))^{1/2}
	\lesssim 
	\frac{(1-q_n)^{1/2}}{n}.
\end{multline*}
Hence, we further get
\begin{multline*}
	|S_1| \lesssim \frac{1}{nq_n}\int_{-U_n} ^{U_n} \int_{-U_n} ^{U_n} |\H'(\phi_X(u))\H'(\phi_X(v))\phi_X(u-v)|\,du\,dv\\
	 +\frac{(1-q_n)^{1/2}}{n q_n} \int_{-U_n} ^{U_n} \int_{-U_n} ^{U_n} |\H'(\phi_X(u))\H'(\phi_X(v))\phi_X(u)\overline{\phi_X(v)}|\,du\,dv.
\end{multline*}
Using again the Cauchy-Schwarz inequality, we get
\begin{eqnarray*}
|S_1| &\lesssim&  \frac{1}{nq_n}\int_{-U_n} ^{U_n} |\H'(\phi_X(u)|^2\,du \int_{\R} |\phi_X(v)|\,dv 
\\ && \hspace{4.5cm}+  \frac{(1-q_n)^{1/2}}{n q_n} \left(\int_{-U_n} ^{U_n}  |\H'(\phi_X(u))|\,du\right)^2
  \\
  &\lesssim&\frac{U_n \varkappa^2 }{nq_n}\Bigl( C_\phi \E[N]+(1-q_n)^{1/2}U_n \Bigr),
\end{eqnarray*}
where we also applied that \(\int_{\R} |\phi_X(v)|\,dv  \leq \E[N] \int_{\R} |\phi_\xi(v)|\,dv = \E[N] C_\phi.\)
As for \(S_2\), we can establish the following upper bound by the  application of the H\"older  inequality,
\begin{multline*}
|S_2|
\leq 
  \varkappa \int_{-U_n} ^{U_n} \int_{-U_n} ^{U_n}
\bigl\Vert \widehat{\phi}_{X}(u)-\phi_{X}(u)\bigr\Vert _{L^{4}}\bigl\Vert (\widehat{\phi}_{X}(v)-\phi_{X}(v))^{2}\bigr\Vert _{L^{4}} \\ 
\hspace{4cm} \times (\mathsf{P}(\mathcal{A}_{n}))^{-1/2}\,du\,dv
\lesssim  \frac{U^2_n \varkappa}{q_n^{1/2} n^{3/2}}.
\end{multline*}
Analogously,
\begin{multline*}
|S_3|
\leq    \varkappa^2\int_{-U_n} ^{U_n} \int_{-U_n} ^{U_n}
\bigl\Vert (\widehat{\phi}_{X}(u)-\phi_{X}(u))^2\bigr\Vert _{L^{4}}\bigl\Vert (\widehat{\phi}_{X}(v)-\phi_{X}(v))^{2}\bigr\Vert _{L^{4}}\\
\hspace{4cm}\times (\mathsf{P}(\mathcal{A}_{n}))^{-1/2}\,du\,dv
\lesssim  \frac{U^2_n \varkappa^2}{q_n^{1/2} n^2}.
\end{multline*}
\subsection{Proof of Theorem~\ref{thm:minmax-pol}}  

The assumption \(q_n \geq 1-U_n^{-2}\) yields that the fourth and the fifth summands in the right-hand side of~\eqref{thm2bound} are of smaller order than the first three. 

(i) Using the assumption on the behaviour of \(\phi_\xi,\) we get
\begin{eqnarray*}
\biggl(\int_{|u|>U_n} \left|\phi_{\xi}(u)\right|\,du\biggr)^2 
\leq
\frac{4 M^2}{\beta^2}U_n^{-2\beta},\qquad
C_\phi = \int_{\R}|\phi_\xi(u)| du \leq 2 M \left(1+ \frac{1}{\beta} \right),
\end{eqnarray*} 
which leads to the estimate 
\begin{eqnarray*}
\E\left[\left(\widehat{p}_\xi(x)-p_\xi(x)\right)^2\Bigr|\A_n\right] 
\lesssim 
\frac{M^2}{\beta^2} U_n^{-2\beta} + 
\frac{U_n}{n} 
 \varkappa^2 M \left(1 +\frac{1}{\beta}\right) \E[N] 
+
\varkappa^2 \frac{U_n^2}{n^2 q_n^2}.
\end{eqnarray*}
The choice \(U_{n}=n^{1/(1+2\beta)}\) yields that the first two summands are of order \(n^{-2\beta/(1+2\beta)},\) while the last summand converges faster, provided \(\beta>1/2\). 

(ii) Similarly, using the properties of gamma and incomplete gamma functions, we get
\begin{eqnarray*}
\biggl(\int_{|u|>U_n} \left|\phi_{\xi}(u)\right|\,du\biggr)^2 
&\leq&
\frac{4 M^2}{\gamma^2 (c_\gamma)^{2/\gamma}}
U_{n}^{2(1-\gamma)}e^{-2c_\gamma U_{n}^{\gamma}},\\
C_\phi = \int_{\R}|\phi_\xi(u)| du &\leq& \frac{2 M}{\gamma c_\gamma^{1/\gamma}} \Gamma(1/\gamma) \leq 2M c_\gamma^{-1/\gamma} \max\bigl(1, \Gamma(\gamma^{-1}_\circ+1)\bigr),
\end{eqnarray*} 
and
\begin{multline*}
\E\left[\left(\widehat{p}_\xi(x)-p_\xi(x)\right)^2\Bigr|\A_n\right] 
\lesssim 
\frac{ M^2}{\gamma^2 (c_\gamma)^{2/\gamma}}
U_{n}^{2(1-\gamma)}e^{-2c_\gamma U_{n}^{\gamma}}\\+ 
\frac{U_n}{n}   \varkappa^2 M \E[N] 
\frac{\max\bigl(1, \Gamma(\gamma^{-1}_\circ+1)\bigr)}{c_\gamma^{1/\gamma}} 
+
\varkappa^2 \frac{U_n^2}{n^2 q_n^2}.
\end{multline*}
Again, under appropriate choice of the sequence \(U_n\) the first two summands yield the required rate of convergence, while the last summand converges faster.

 \subsection{Proof of Theorem~\ref{thm:lb}}
\begin{itemize}
\item[(i)]
Let us start with the class \(\mathcal{C}(\beta,M)\).
Set
\[
K_{0,\mathcal{C}}(x)=\prod\limits _{k=1}^{\infty}\left(\frac{\sin(a_{k}x)}{a_{k}x}\right)^{2}
\]
with $a_{k}=2^{-k-1},$ $k\in\mathbb{N}.$ It can be observed that $K_{0,\mathcal{C}}$
is a characteristic function of the random variable $Z_1=\sum_{k=1}^{\infty}a_{k}(U_{k}+\widetilde{U}_{k})$
where $U_{k},\widetilde{U}_{k},$ $k\in\mathbb{N}, $ are jointly independent
random variables with the uniform distribution on $[-1,1].$ In turn, the Fourier transform \(\phi_{K_{0,\mathcal{C}}}\) of \(K_{0,\mathcal{C}}\) can be represented as $\phi_{K_{0,\mathcal{C}}}(u) = 2\pi p_{Z_1}(-u),$ where \(p_{Z_1}\) is the density of \(Z_1\). 
Since $|Z_1|\leq\sum_{k=1}^{\infty}2a_{k}=\sum_{k=1}^{\infty}2^{-k}=1$, the function $\phi_{K_{0,\mathcal{C}}}$ vanishes for $|u|>1$.

Let us now define the function
\[
K_{\mathcal{C}}(x)=\frac{1}{\pi}\frac{\sin(2x)}{x}\frac{K_{0,\mathcal{C}}(x)}{K_{0,\mathcal{C}}(0)}
\]
It can be seen that it is well-defined on $\mathbb{R}$, and, since
\[
\int_{\R} e^{\i ux}\frac{\sin(ax)}{x}\,dx=\pi1_{\{|u|\leq a\}}, \quad \forall a\geq 0,\, \forall x,u\in\R,
\]  
its Fourier transform is given by
\begin{eqnarray}\nonumber
\phi_{K_{\mathcal{C}}}(u) & = &\frac{1}{\pi}\frac{1}{K_{0,\mathcal{C}}(0)}\int e^{
\i ux}\frac{\sin(2x)}{x}K_{0,\mathcal{C}}(x)\,dx\\
&=&\nonumber \frac{1}{\pi K_{0,\mathcal{C}}(0)} \E\left[\pi 1_{\{|Z+u|\leq 2\}}\right]
= \frac{1}{\pi K_{\mathcal{C}}(0)}\int_{-2} ^2 \pi p_{Z_1}(x-u)\,dx\\
\label{phiK}
 & = &\frac{\int_{-2}^{2}\phi_{K_{0,\mathcal{C}}}(u-x)\,dx}{\int_{-1}^{1}\phi_{K_{0,\mathcal{C}}}(s)\,ds},
\end{eqnarray}

where the last line follows from the equalities
 \[
 \phi_{K_{0,\mathcal{C}}}(u)=2\pi p_{Z_1}(-u)
 \quad \text{and} \quad
 K_{0,\mathcal{C}}(0) = \frac{1}{2\pi}\int_{-1}^{1}\phi_{K_{0,\mathcal{C}}}(s)\,ds.
 \]

Formula~\eqref{phiK} yields that  $\phi_{K_{\mathcal{C}}}(u)=1$ for $u\in[-1,1]$,  $0<\phi_{K_{\mathcal{C}}}(u)<1$
for all $u\in\mathbb{R},$ and $\phi_{K_{\mathcal{C}}}(u)=0$ for $|u|>3.$

Now consider a distribution of $\xi$ which is infinitely
divisible with the following L\'evy triplet:
\[
b_{1,\mathcal{C}}=0,\quad\sigma_{1,\mathcal{C}}=0,\quad\nu_{1,\mathcal{C}}(x)=\frac{1+\beta}{2}\frac{|x|^{-1}}{1+|x|}.
\]
The characteristic exponent of this distribution is given by 
\begin{align*}
\psi_{1,\mathcal{C}}(u) & =\frac{1+\beta}{2}\int(e^{\i ux}-1)\frac{|x|^{-1}}{1+|x|}\,dx\\
 & =(1+\beta)\int_{0}^{\infty}\frac{\cos(ux)-1}{x}\frac{1}{1+x}\,dx.
\end{align*}
It holds, for $u>0,$
\begin{align}\nonumber
\psi_{1,\mathcal{C}}'(u) & =-(1+\beta)\int_{0}^{\infty}\frac{\sin(ux)}{1+x}\,dx\\
\nonumber
 & =\frac{1+\beta}{u}\int_{0}^{\infty}\frac{1}{1+x}\,d\cos(ux)\\
 \nonumber
 & =-\frac{1+\beta}{u}+\frac{1+\beta}{u}\int_{0}^{\infty}\frac{\cos(ux)}{(1+x)^{2}}\,dx\\
 \label{al1}
 & =-\frac{1+\beta}{u}+\frac{2(1+\beta)}{u^{2}}\int_{0}^{\infty}\frac{\sin(ux)}{(1+x)^{3}}\,dx.
\end{align}
Hence, by integrating from $1$ to $s,$ we derive with some \(c_1>0\)
\[
\left|\psi_{1,\mathcal{C}}(s)+(1+\beta)\log(s)\right|\leq c_{1},\quad s>1.
\]
As a result, the corresponding characteristic function $\phi_{1}(u)$
satisfies 
\[
e^{-c_{1}}|u|^{-1-\beta}\leq|\phi_{1,\mathcal{C}}(u)|\leq e^{c_{1}}|u|^{-1-\beta},\quad|u|>1,
\]
while the density $p_{1,\mathcal{C}}$ of $\xi$ satisfies $p_{1,\mathcal{C}}(x)\geq c_{2}/(1+x^{2})$
for some $c_{2}>0$, since 
\begin{align*}
p_{1,\mathcal{C}}(x) & =\frac{1}{2\pi}\int e^{-\i ux+\psi_{1,\mathcal{C}}(u)}\,du.
\end{align*}
Using the fact that 
\begin{align*}
\psi_{1,\mathcal{C}}^{(2)}(u) & =-(1+\beta)\int_{0}^{\infty}\frac{x\cos(ux)}{1+x}\,dx\\
 & =-\frac{(1+\beta)\delta_0(u)}{2}+(1+\beta)\int_{0}^{\infty}\frac{\cos(ux)}{1+x}\,dx\\
 & =-\frac{(1+\beta)\delta_0(u)}{2}-(1+\beta)\mathrm{ci}(u)\cos(u)-(1+\beta)\mathrm{si}(u)\sin(u)
\end{align*}
with 
\[
\mathrm{ci}(z)=-\int_{z}^{\infty}\frac{\cos(t)}{t}\,dt=\gamma+\log(z)-\int_{0}^{z}\frac{1-\cos(t)}{t}\,dt
\]
and 
\[
\mathrm{si}(z)=-\int_{z}^{\infty}\frac{\sin(t)}{t}\,dt,
\]
we derive for $x>0,$
\begin{eqnarray*}
p_{1,\mathcal{C}}(x) &=& \frac{1}{\pi}\int_{0}^{\infty}\cos(ux)e^{\psi_{1,\mathcal{C}}(u)}\,du\\
 &=&-\frac{1}{\pi}\frac{1}{x}\int\psi_{1}'(u)\sin(ux)e^{\psi_{1,\mathcal{C}}(u)}\,du\\
 &=&-\frac{1}{\pi}\frac{1}{x^{2}}\int\psi_{1,\mathcal{C}}^{(2)}(u)\cos(ux)e^{\psi_{1,\mathcal{C}}(u)}\,du\\
 && \hspace{1cm}- \frac{1}{\pi}\frac{1}{x^{2}}\int(\psi_{1,\mathcal{C}}'(u))^{2}\cos(ux)e^{\psi_{1,\mathcal{C}}(u)}\,du\\
 &=&\frac{1+\beta}{2\pi}\frac{1}{x^{2}}+R(x),
\end{eqnarray*}
where $R(x)=o(x^{-2})$ for $x\to\infty$ since
\[
\int_{0}^{\infty}\log(u)\cos(ux)e^{\psi_{1,\mathcal{C}}(u)}\,du=O(\log(x)/x),\quad x\to\infty.
\]
Now set 
\[
\nu_{2,\mathcal{C}}(x)=\nu_{1,\mathcal{C}}(x)+\varepsilon\delta_{h}^{\mathcal{C}}(x),\quad\delta_{h}^{\mathcal{C}}(x)=h^{-1}K_{\mathcal{C}}(x/h).
\]
One can always choose $\varepsilon$ in such a way that $\nu_{2,\mathcal{C}}$
stays positive on $\mathbb{R}$ and thus can be viewed as the L\'evy
measure. Denote 
\[
\psi_{\xi,i}^{\mathcal{C}}(u)=\int(e^{\i ux}-1)\nu_{i,\mathcal{C}}(x)\,dx,\quad i=1,2.
\]
Then it holds that $\psi_{\xi,2}^{\mathcal{C}}(u)=\psi_{\xi,1}^{\mathcal{C}}(u)+\varepsilon\widehat{\delta}_{h}^{\mathcal{C}}(u)$
with 
\begin{align*}
\widehat{\delta}_{h}^{\mathcal{C}}(u) & :=\int(e^{\i ux}-1)\delta_{h}^{\mathcal{C}}(x)\,dx=\phi_{K_{\mathcal{C}}}(hu)-1,
\end{align*}
since \(\int_\R K_{\mathcal{C}}(x)\,dx=\phi_{K_{\mathcal{C}}}(0)=1\).
Note that $\widehat{\delta}_{h}^{\mathcal{C}}(u)\leq0$ for all $u$ and
\[
\widehat{\delta}_{h}^{\mathcal{C}}(u)=0,\,u\in[-1/h,1/h],\quad\widehat{\delta}_{h}^{\mathcal{C}}(u)=-1,\,|u|>3/h.
\]
Denote by $p_{\xi,1}^{\mathcal{C}}$ and $p_{\xi,2}^{\mathcal{C}}$ the densities of infinitely
divisible distributions with characteristic exponents $\psi_{\xi,1}^{\mathcal{C}}$
and $\psi_{\xi,2}^{\mathcal{C}}$, respectively. Furthermore, set $\phi_{X,i}^{\mathcal{C}}(u)=\mathcal{L}_{N}(-\psi_{\xi,i}^{\mathcal{C}}(u)),$ $i=1,2,$
and let $p_{X,i}^{\mathcal{C}}$ be the density corresponding to the characteristic function $\phi_{X,i}^{\mathcal{C}},$$i=1,2.$
We have 
\[
p_{X,1}^{\mathcal{C}}(x)=\sum_{k=1}^{\infty}\mathtt{p}_k\,(p_{\xi,1}^{\mathcal{C}})^{\star k}(x)\geq \mathtt{p}_1 p_{\xi,1}^{\mathcal{C}}(x)\geq\mathtt{p}_1/(1+x^{2}).
\]
Hence 
\begin{multline*}
\chi^{2}\left(p_{X,1}^{\mathcal{C}},p_{X,2}^{\mathcal{C}}\right)  =  \int_{\mathbb{R}}\frac{\left(p_{X,1}^{\mathcal{C}}(x)-p_{X,2}^{\mathcal{C}}(x)\right)^{2}}{p_{X,1}^{\mathcal{C}}(x)}\,dx\\
 \hspace{-1.5cm} \lesssim  \mathtt{p}_1^{-1}\int_{\mathbb{R}}(1+|x|^{2})\left(p_{X,1}^{\mathcal{C}}(x)-p_{X,2}^{\mathcal{C}}(x)\right)^{2}dx\\
\hspace{-2.5cm}=\frac{1}{2\pi\mathtt{p}_1}\int_{\mathbb{R}_{+}}\left|\phi_{X,1}^{\mathcal{C}}(u)-\phi_{X,2}^{\mathcal{C}}(u)\right|^{2}du\\
  \hspace{0.5cm} +\frac{1}{2\pi\mathtt{p}_1}\int_{\mathbb{R}_{+}}\left\vert (\phi_{X,1}^{\mathcal{C}})^{(1)}(u)-(\phi_{X,2}^{\mathcal{C}})^{(1)}(u)\right\vert ^{2}du\\
 \hspace{-0.5cm}=\frac{1}{2\pi\mathtt{p}_1}\int_{\mathbb{R}_{+}}\Bigl(\mathcal{L}_{N}(-\psi_{\xi,1}^{\mathcal{C}}(u))-\mathcal{L}_{N}(-\psi_{\xi,2}^{\mathcal{C}}(u))\Bigr)^{2}du\\
  +\frac{1}{2\pi\mathtt{p}_1}\int_{\mathbb{R}_{+}}\left(\frac{d}{du}\left[\mathcal{L}_{N}(-\psi_{\xi,1}^{\mathcal{C}}(u))-\mathcal{L}_{N}(-\psi_{\xi,2}^{\mathcal{C}}(u))\right]\right)^{2}du,
\end{multline*}
where the second equality follows from the Parseval-Plancherel identity.
Using the fact that $\widehat{\delta}_{h}^{\mathcal{C}}(u)=0,\,u\in[-1/h,1/h]$,
we get 
\[
\mathcal{L}_{N}(-\psi_{\xi,1}^{\mathcal{C}}(u))-\mathcal{L}_{N}(-\psi_{\xi,2}^{\mathcal{C}}(u))=0,\quad|u|\leq1/h
\]
and, by the mean-value theorem,
\[
\mathcal{L}_{N}(-\psi_{\xi,1}^{\mathcal{C}}(u))-\mathcal{L}_{N}(-\psi_{\xi,2}^{\mathcal{C}}(u))=\mathcal{L}_{N}^{(1)}(-\psi_{\xi,1}^{\mathcal{C}}(u)-\theta\varepsilon\widehat{\delta}_{h}^{\mathcal{C}}(u))\,\varepsilon\widehat{\delta}_{h}^{\mathcal{C}}(u)
\]
for $|u|>1/h$ and some $\theta\in(0,1).$ Noting that
\[
\mathcal{L}_{N}^{(n)}(z)=\sum_{k=1}^{\infty}(-k)^{n}\mathtt{p}_ke^{-kz}
\]
and 
\begin{multline}\label{LNn}
\mathcal{L}_{N}^{(n)}(-\psi_{\xi,1}^{\mathcal{C}}(u)-\theta\varepsilon\widehat{\delta}_{h}^{\mathcal{C}}(u)) \\ =e^{\psi_{\xi,1}^{\mathcal{C}}(u)}\sum_{k=1}^{\infty}(-k)^{n}\mathtt{p}_k e^{(k-1)\psi_{\xi,1}^{\mathcal{C}}(u)+k\theta\varepsilon\widehat{\delta}_{h}^{\mathcal{C}}(u)},
\end{multline}
we get that
\[
\left|\mathcal{L}_{N}(-\psi_{\xi,1}^{\mathcal{C}}(u))-\mathcal{L}_{N}(-\psi_{\xi,2}^{\mathcal{C}}(u))\right|\leq e^{\psi_{\xi,1}^{\mathcal{C}}(u)}\Bigl( \sum_{k=1}^{\infty}k \mathtt{p}_k\Bigr).
\]
Furthermore, 
\begin{multline*}
\frac{d}{du}\left[\mathcal{L}_{N}(-\psi_{\xi,1}^{\mathcal{C}}(u))-\mathcal{L}_{N}(-\psi_{\xi,2}^{\mathcal{C}}(u))\right]=-(\psi_{\xi,1}^{\mathcal{C}})'(u)\mathcal{L}_{N}^{(1)}(-\psi_{\xi,1}^{\mathcal{C}}(u))\\
  \hspace{6cm}+ (\psi_{\xi,2}^{\mathcal{C}})'(u)\mathcal{L}_{N}^{(1)}(-\psi_{\xi,2}^{\mathcal{C}}(u))\\
\hspace{3cm}= -(\psi_{\xi,1}^{\mathcal{C}})'(u)[\mathcal{L}_{N}^{(1)}(-\psi_{\xi,1}^{\mathcal{C}}(u))-\mathcal{L}_{N}^{(1)}(-\psi_{\xi,2}^{\mathcal{C}}(u))]\\
 \hspace{6.2cm}+\varepsilon(\widehat{\delta}_{h}^{\mathcal{C}})'(u)\mathcal{L}_{N}^{(1)}(-\psi_{\xi,2}^{\mathcal{C}}(u))\\
 \hspace{3cm}=-\varepsilon\widehat{\delta}_{h}^{\mathcal{C}}(u) (\psi_{\xi,1}^{\mathcal{C}})'(u)[\mathcal{L}_{N}^{(2)}(-\psi_{\xi,1}^{\mathcal{C}}(u)-\widetilde{\theta}\varepsilon\widehat{\delta}_{h}^{\mathcal{C}}(u))]\\
 \hspace{1cm}+\varepsilon(\widehat{\delta}_{h}^{\mathcal{C}})'(u)\mathcal{L}_{N}^{(1)}(-\psi_{\xi,2}^{\mathcal{C}}(u)),
\end{multline*}
where \(\widetilde{\theta} \in (0,1).\) By analogue to~\eqref{LNn}, we have 
\begin{eqnarray*}
\bigl| \mathcal{L}_{N}^{(2)}(-\psi_{\xi,1}^{\mathcal{C}}(u)-\widetilde{\theta}\varepsilon\widehat{\delta}_{h}^{\mathcal{C}}(u)) \bigr| \leq e^{\psi_{\xi,1}^{\mathcal{C}}(u)}\Bigl(\sum_{k=1}^{\infty}k^{2}\mathtt{p}_k\Bigr).
\end{eqnarray*}
Note that, for all \(u\in\R\),
\(
\bigl| \widetilde{\delta}_{h}^{\mathcal{C}} (u) \bigr| \leq 1
\)
and

\[
\bigl| (\widehat{\delta}_{h}^{\mathcal{C}})'(u) \bigr| =h \bigl| \phi_{K_{\mathcal{C}}}'(hu)\bigr|
=h^2 \frac{|p_{Z_1}(-hu+2) -p_{Z_1}(-hu -2 )|}{\P\bigl\{|Z_1| \leq 1\bigr\}} \lesssim h^2.
\]
In addition, due to~\eqref{al1}, 
\((\psi_{\xi,1}^{\mathcal{C}})'(u)=O(1/u),\;  u \to \infty.\)
As a result,

\begin{eqnarray*}
\chi^{2}\left(p_{X,1}^{\mathcal{C}},p_{X,2}^{\mathcal{C}}\right) & \lesssim &\mathtt{p}_1^{-1}(\E[N])^2\int_{u>1/h}e^{2\psi_{\xi,1}^{\mathcal{C}}(u)}du\\
 & & \hspace{0.5cm} +\mathtt{p}_1^{-1} (\E[N])^2 h^{4}\int_{u>1/h}e^{2\psi_{\xi,1}^{\mathcal{C}}(u)}du\\
 & &\hspace{0.5cm} +\mathtt{p}_1^{-1} \E[N^2] \int_{u>1/h}|(\psi_{\xi,1}^{\mathcal{C}})'(u)|^{2}e^{2\psi_{\xi,1}^{\mathcal{C}}(u)}du\\
 & \lesssim &\mathtt{p}_1^{-1} (\E[N])^2 h^{2\beta+1}.
\end{eqnarray*}
Moreover
\begin{multline*}
\sup\limits_{x\in\R} \left|p_{\xi,1}^{\mathcal{C}}(x)-p_{\xi,2}^{\mathcal{C}}(x)\right|
\geq
p_{\xi,1}(0)^{\mathcal{C}}-p_{\xi,2}^{\mathcal{C}}(0)\\
\geq\frac{1}{\pi}\int_{u>3/h}\phi_{\xi,1}^{\mathcal{C}}(u)(1-e^{-\varepsilon})\,du\gtrsim h^{\beta}.
\end{multline*}
Using the technique for proving the lower bounds based on the chi-squared distance (see, e.g., Chapter 2 in Tsybakov~(\citeyear{Tsybakov}), notably Theorem 2.2), one obtains 
\[
\lim\inf_{n\to\infty}\inf_{\widehat{p}_{\xi}}\sup_{p_{\xi}\in\mathcal{C}(\beta,M)\cap \mathrm{Sym}(\R)}\mathrm{P}\left(\sup_{x\in \R}|\widehat{p}_{\xi}(x)-p_{\xi}(x)|>n^{-\beta/(2\beta+1)}\right)>0.
\]
\item[(ii)] 
The lower bound for the class \(\mathcal{E}(\gamma,M)\) essentially follows from the same ideas as in the proof above. Namely, let us define the kernel
\[
K_{0,\mathcal{E}} (x) = \prod\limits_{k=1} ^{\infty} \left(\frac{\sin(a_k \epsilon x)}{a_k \epsilon x}\right)^2
\]
with some \(\epsilon>0\) and \(a_k=2^{-k-1}\), \(k\in\N\). Again, it can be seen that \(K_{0,\mathcal{E}}\) is a characteristic function of the random variable \(Z_2:=\sum_{k=1} ^{\infty} a_k (U_k^{\epsilon}+\widetilde{U}_k ^{\epsilon})\), where \(U_k^{\epsilon}, \widetilde{U}_k ^{\epsilon}\) are mutually independent and having a uniform on \([-\epsilon,\epsilon]\) distribution. As before, we have that for any \(u\in\R\) the Fourier transform \(\phi_{K_{0,\mathcal{E}}}\) of \(K_{0,\mathcal{E}}\) is equal to \(\phi_{K_{0,\mathcal{E}}}(u)=2\pi p_{Z_2}(-u)\) with \(p_{Z_2}\) being the p.d.f.\ of \(Z_2\), and, since \(|Z_2|\leq \epsilon\), the function \(\phi_{K_{0,\mathcal{E}}}\) vanishes for \(|u|>\epsilon\). Next, let us define the function
\[
K_\mathcal{E} (x) = \frac{1}{\pi} \frac{\sin(x)}{x} \frac{K_{0,\mathcal{E}}(x)}{K_{0,\mathcal{E}} (0)}.
\]
Similar calculations as in~\eqref{phiK} yield that for all \(u\in\R\) its Fourier transform is given by
\[
\phi_{K_\mathcal{E}}(u) = \frac{\int_{-1} ^1 \phi_{K_{0,\mathcal{E}}}(u-x)\,dx}{\int_{-\epsilon} ^{\epsilon} \phi_{K_{0,\mathcal{E}}}(s)\,ds},
\]
implying that \(0<\phi_{K_\mathcal{E}}(u)<1\) for all \(u\in\R\) with \(\phi_{K_\mathcal{E}}(u)=1\) for \(|u|\leq 1-\epsilon\) and \(\phi_{K_\mathcal{E}}(u)=0\) whenever \(|u|>1+\epsilon\).

Now, let us consider the case when \(\xi\) has the Cauchy distribution with the scale parameter \(\gamma>0\) and location parameter equal to zero. Its L\'evy triplet is given by
\[
b_{1,\mathcal{E}}=0,\,\, \sigma_{1,\mathcal{E}}=0,\,\, 
\nu_{1,\mathcal{E}}=c_{1,\gamma} \frac{1}{x^{1+\gamma}}1_{\{(0,\infty)\}}(x) + c_{2,\gamma} \frac{1}{|x|^{1+\gamma}} 1_{\{(-\infty,0)\}}(x),
\]
with \(c_{1,\gamma}\) and \(c_{2,\gamma}\) being some fixed constants, and the characteristic exponent and characteristic function are given for any \(u\in\R\) by \(\psi_{1,\mathcal{E}}(u)=-\gamma |u|\) and \(\phi_{1,\mathcal{E}}(u)=e^{-\gamma|u|}\), respectively. Define also \(\nu_{2,\mathcal{E}}(x)=\nu_{1,\mathcal{E}}(x)+\eps \delta_h^{\mathcal{E}}(x)\) with \(\delta_h ^{\mathcal{E}}=h^{-1} K_{\mathcal{E}}(x/h)\) and \(\eps>0\) small enough so that \(\nu_{2,\mathcal{E}}(x)\geq 0\) for all \(x\in\R\). As before, let us denote by
\[
\psi_{\xi,i}^{\mathcal{E}}(u)=\int_\R (e^{\i ux}-1)\,\nu_{i,\mathcal{E}} (x)\,dx, \quad i=1,2,
\]
the characteristic exponents corresponding to the measures \(\nu_{i,\mathcal{E}}\), and by \(p_{\xi,i} ^{\mathcal{E}}\) the respective densities, \(i=1,2\). In addition, set \(\phi_{X,i}^{\mathcal{E}}(u)=\mathcal{L}_{N}(-\psi_{\xi,i}^{\mathcal{E}}(u)),\) \(u\in\R\), and denote by \(p_{X,i}^{\mathcal{E}}\) the density corresponding to the characteristic function \(\phi_{X,i}^{\mathcal{E}}\), \(i=1,2\).
Since
\[
p_{X,1} ^{\mathcal{E}}(x) \geq \mathtt{p}_1 p_{\xi,1} ^{\mathcal{E}}(x) \geq \mathtt{p}_1 \gamma/(\pi(\gamma^2+x^2)),
\]
we get
\begin{multline*}
\chi^2 (p_{X,1} ^{\mathcal{E}}, p_{X,2} ^{\mathcal{E}}) = \int_\R \frac{\left(p_{X,1} ^{\mathcal{E}}(x)-p_{X,2} ^{\mathcal{E}}(x)\right)^2}{p_{X,1} ^{\mathcal{E}}(x)}\,dx \\
\lesssim  \frac{\gamma}{\mathtt{p}_1} \int_{\R_+} |\phi_{X,1} ^{\mathcal{E}}(u)-\phi_{X,2} ^{\mathcal{E}}(u)|^2\,du \\
\hspace{2.5cm} +\frac{1}{\gamma\mathtt{p}_1} \int_{\R_+} \left|(\phi_{X,1} ^{\mathcal{E}})^{(1)}(u)-(\phi_{X,2}^{\mathcal{E}})^{(1)}(u)\right|^2\,du \\
\hspace{2cm}= \frac{\gamma}{\mathtt{p}_1} \int_{\R_+} \left(\L_N(-\psi_{\xi,1} ^{\mathcal{E}}(u))-\L_N(-\psi_{\xi,2} ^{\mathcal{E}} (u))\right)^2\,du \\
\hspace{-1cm} +\frac{1}{\gamma\mathtt{p}_1} \int_{\R_+} \left(\frac{d}{du}[\L_N(-\psi_{\xi,1} ^{\mathcal{E}}(u))-\L_N(-\psi_{\xi,2} ^{\mathcal{E}} (u))]\right)^2\,du.
\end{multline*}
Observing that \(\psi_{\xi,2}^{\mathcal{E}}(u)=\psi_{\xi,1}^{\mathcal{E}}(u)+\widetilde{\delta}_h^{\mathcal{E}}(u)\), \(u\in\R\), with 
\[
\widetilde{\delta}_h^{\mathcal{E}}(u):=\int_\R (e^{\i ux}-1)\delta_h^{\mathcal{E}}(x)\,dx=\phi_{K_{\mathcal{E}}}(hu)-1
=
\begin{cases}
	0,& |u|\leq (1-\epsilon)/h,\\
	1,& |u|>(1+\epsilon)/h,
\end{cases}
\] 
and \(\widetilde{\delta}_h^{\mathcal{E}}(u)\in(0,1)\) for all \(u\in\R\), by the similar argument as before, we further have that
\begin{eqnarray*}
\chi^2(p_{X,1}^{\mathcal{E}}	,p_{X,2}^\mathcal{E})
&\lesssim & \frac{\gamma}{\mathtt{p}_1}(\E[N])^2 \int_{u>(1-\epsilon)/h} e^{2\psi_{\xi,1}^{\mathcal{E}}(u)}\,du \\
&& \hspace{0.5cm}+ \frac{h^4}{\gamma \mathtt{p}_1}(\E[N])^2 \int_{u>(1-\epsilon)/h} e^{2\psi_{\xi,1}^{\mathcal{E}}(u)}\,du\\
&& \hspace{0.5cm} + \frac{\E[N^2]}{\gamma \mathtt{p}_1} \int_{u>(1-\epsilon)/h} |(\psi_{\xi,1} ^{\mathcal{E}})'(u)|^2e^{2\psi_{\xi,1}^{\mathcal{E}}(u)}\,du \\
&\lesssim & \left((\E[N])^2(1+h^4)+\E[N^2]\right)\mathtt{p}_1 ^{-1} e^{-2\gamma(1-\epsilon)/h}.
\end{eqnarray*}
Moreover, it holds that
\begin{multline*}
\sup\limits_{x\in\R}|p_{\xi,1}^{\mathcal{E}}(x)-p_{\xi,2}^{\mathcal{E}}(x)|
\geq p_{\xi,1}^{\mathcal{E}}(0)-p_{\xi,2}^{\mathcal{E}}(0)\\
\geq \frac{1}{\pi} \int_{u>(1+\epsilon)/h} \phi_{\xi,1} ^{\mathcal{E}}(u)(1-e^{-\eps})\,du
\gtrsim \gamma^{-1} e^{-\gamma(1+\epsilon)/h}.
\end{multline*}
Hence, taking \(h=2(1-\epsilon)\gamma/\log n\), one can again apply the technique for proving the lower bounds based on the \(\chi^2\)-distance to get
\[
\lim\inf\limits_{n\to\infty} \inf_{\widehat{p}_{\xi}} \sup\limits_{p_{\xi}\in\mathcal{E}(\gamma,M)\cap \mathrm{Sym}(\R)}\mathbb{P}\left(\sup_{x\in \R}|\widehat{p}_{\xi}(x)-p_{\xi}(x)|>n^{-\frac{1+\epsilon}{2(1-\epsilon)}}\right)>0.
\]
Since the latter holds for any \(\epsilon>0\), we conclude the claim.

\end{itemize}

\subsection{Proof of Theorem~\ref{thm25}}
Obviously, for any \(k, k' =1..K_n,\)
\begin{eqnarray*}
 \bigl(\widehat{p}_{\xi,k'}(x) - \widehat{p}_{\xi,k}(x)\bigr)^2  
&\leq& V\bigl(\max(k,k')\bigr) + A\bigl(\min(k,k'), x \bigr), \qquad x \in \R.
\end{eqnarray*}
Therefore,   for any \(k =1..K_n,\)
\begin{eqnarray*}
\left(\widehat{p}_{\xi,\hat{k}}(x)-p_\xi(x)\right)^2
&\leq& 3
\left(\widehat{p}_{\xi,\hat{k}}(x)-
\widehat{p}_{\xi,\min(\hat{k},k)}(x)\right)^2 \\
&&\hspace{0.3cm} 
+ 
3\left(
\widehat{p}_{\xi,\min(\hat{k},k)}(x)
- 
\widehat{p}_{\xi,k}(x)\right)^2
+ 
3\left(
\widehat{p}_{\xi,k}(x)
- 
p_\xi(x) \right)^2\\
&\leq& 3 \Bigl( V(\hat{k}) + A(k,x) + V(k) + A(\hat{k},x)\Bigr)
 \\
&& \hspace{5cm} + 3\left(
\widehat{p}_{\xi,k}(x)
- 
p_\xi(x) \right)^2\\
&\leq& 6\Bigl( A(k,x) + V(k) \Bigr) + 
3\left(
\widehat{p}_{\xi,k}(x)
- 
p_\xi(x) \right)^2.
\end{eqnarray*}
We arrive at the following upper bound 
\begin{multline}\label{qqq}
\E \Bigl[ \left(\widehat{p}_{\xi,\hat{k}}(x)-p_\xi(x)\right)^2 \Bigr| \A_n\Bigr] \\
\lesssim  
\min_{k=1.. K_n} \Bigl( 
\E \Bigl[ \left(\widehat{p}_{\xi,k}(x)-p_\xi(x)\right)^2 \Bigr| \A_n\Bigr] + 2 \E\Bigl[A(k,x )\Bigr| \A_n\Bigr] + 2 V(k) \Bigr).
\end{multline}
To complete the proof, we will now focus on the study of \(A(k, x). \) Using the same decomposition of the function \(\H(z)\) as in 
 the proof of Theorem~\ref{thm:minmax-pol}(i), we get
\begin{multline*}
\left(\widehat{p}_{\xi,k}(x)-p_{\xi,k}(x)\right)^2 \lesssim 
\int_{-kh}^{kh} | \H'\left(\phi_X(u)\right)|^2
\left|
\widehat{\phi}_X(u)-\phi_X(u)\right|^2
\\ +
\Bigl(
\E_{\tau}\left[g_{\tau}\left(\widehat{\phi}_X(u),\phi_X(u)\right)\right]
\Bigr)^2\left|\widehat{\phi}_X(u)-\phi_X(u)\right|^4 du,
\end{multline*}
and the upper bound of the mean-squared error 
\begin{eqnarray}\label{adf}
\E \Bigl[  
\left(\widehat{p}_{\xi,k}(x)-p_{\xi,k}(x)\right)^2 
 \Bigr| \A_n\Bigr] 
 &\lesssim&  
 \varkappa^2 h^2 \frac{k^2}{n^2 q_n^2} +
C(M, \varkappa,N) h \frac{k}{n}  \lesssim V(k),
\end{eqnarray}
provided \(\ell >  \varkappa^2 M \E[N]
h\) in the polynomial case.
The last inequality in~\eqref{adf} follows from  our choice of \(K_n\), since \(k/n \leq K_n /n \to 0\) as \(n \to \infty.\) 

In the rest of the proof we need also the inequality 
\begin{eqnarray*}
\P \Bigl\{ 
\bigl| \widehat{\phi}_{X}(u)-\phi_{X}(u) \bigr| \leq v 
\Bigr\} 
\geq 1 - 2 \exp\Bigl\{
 \frac{-v^2 n}{4  +(4/3) v}
\Bigr\}, \qquad u,v \in \R,
\end{eqnarray*}
which follows from the Bernstein inequality. Under the choice \(v=n^{-(\delta+\zeta)/2}\) we get that the probability of the event 
\(
\CC_n :=\Bigl\{ \bigl| \widehat{\phi}_{X}(u)-\phi_{X}(u) \bigr| \leq n^{-(\delta+\zeta)/2} \Bigr\}
\)
is larger or equal to \(1 - 2 e^{-
n^{1-(\delta+\zeta)}/5}\) for any  \(u \in \R\).
On the intersection of events \(\A_n\) and \(\CC_n\) we have 
\begin{eqnarray*}
\left(
\widehat{p}_{\xi,k}(x)-p_{\xi,k}(x)\right)^2
\lesssim K_n n^{-(\delta+\zeta)} \lesssim n^{-\zeta},
\end{eqnarray*}
and therefore for any \( k=1..K_n,\)
 \begin{multline*}
\E \Bigl[ \Bigl( \bigl(\widehat{p}_{\xi,k}(x) - p_{\xi,k}(x)\bigr)^2  -n^{-\zeta} \Bigr)_+\Bigr| \A_n\Bigr] \\
\lesssim \E\Bigl[\bigl(\widehat{p}_{\xi,k}(x) - p_{\xi,k}(x)\bigr)^2 \Bigr| \A_n\Bigr] \P\Bigl\{\overline{\CC_n}\Bigr\}
\lesssim V(k) e^{-
n^{\zeta}/5}.
\end{multline*}
Then we have 
\begin{align*}
\E \Bigl[ \max_{k' = (k+1),...,K_n} &\Bigl(\bigl(\widehat{p}_{\xi,k'}(x) - \widehat{p}_{\xi,k}(x)\bigr)^2 - V(k')\Bigr)_+ \Bigr| \A_n\Bigr] \\
&\lesssim 
\E \Bigl[ \max_{ (k+1),...,K_n}  \Bigl( \bigl(\widehat{p}_{\xi,k'}(x) - p_{\xi,k'}(x)\bigr)^2  - n^{-\zeta} \Bigr)_+\Bigr| \A_n\Bigr] \\
& \qquad\qquad+
\E \Bigl[ \bigl(\widehat{p}_{\xi,k}(x) - p_{\xi,k}(x)\bigr)^2  \Bigr| \A_n\Bigr] \\
&\qquad\qquad + \max_{ (k+1),...,K_n} 
\bigl( p_{\xi,k'}(x) - p_{\xi,k}(x)\bigr)^2 +n^{-\zeta} \\
&\lesssim 
V(k) + 
 \Bigl( \int_{|u| >kh} |\phi_\xi(u)| du \Bigr)^2 + n^{-\zeta}.
\end{align*}
Substituting this inequality into~\eqref{qqq}, we arrive at the desired result.

\subsection{Proof of Theorem~\ref{thm:gen-bound-power}}
 It holds for all \(x\in\R\),
\begin{multline*}
\E\left[\widehat{p}_\xi(x)|\B_n\right]-p_\xi(x) 
= \frac{1}{2\pi} \int_{|u|\leq U_n} e^{-\i ux}\, \E\left[(\widehat{\phi}_X(u))^{1/m}-(\phi_X(u))^{1/m}|\B_n\right]\,du 
\\
 - \frac{1}{2\pi}\int_{|u|>U_n} e^{-\i ux } \phi_\xi(u)\,du
=  I_1 + I_2.
\end{multline*}
Applying Lemma~\ref{lem:taylor} to \(f(z):=z^{1/m}\) with \(k=2\), \(z=\widehat{\phi}_X(u)\), \(a=\phi_X(u)\),  we get 
\begin{multline*}
\E\left[(\widehat{\phi}_X(u))^{1/m}-(\phi_X(u))^{1/m}|\B_n\right]\\ 
=\E\Bigl[\E_{\tau}[g_{\tau}(\widehat{\phi}_X(u),\phi_X(u))](\widehat{\phi}_X(u)-\phi_X(u))^2\Big|\B_n\Bigr]
\end{multline*}
where
\[
g_{\tau}(\widehat{\phi}_X(u),\phi_X(u)) := \frac{1-m}{m^2}(1-\tau)\bigl(\phi_{X,\tau}(u)\bigr)^{1/m-2}.
\]
Note that \(|\phi_{X,\tau}(u)|>(1-\varkappa)|\phi_X(u)|\) on \(\B_n\) and  
\[
|g_{\tau}(\widehat{\phi}_X(u),\phi_X(u))|\leq  \frac{m-1}{m^2} (1-\tau)\bigl((1-\varkappa)|\phi_X(u)|\bigr)^{1/m-2}.
\]
Hence 
\[
|I_1|\lesssim \frac{ \kappa^2(1- \varkappa)^{1/m-2}}{n\,\P(\B_n)} \int_{|u|\leq U_n} |\phi_X(u)|^{1/m} \,du\lesssim \frac{U_n^{-\beta}}{n\,q_n}.
\]
Trivially,  \(|I_2| \leq \int_{|u|>U_n} \left|\phi_{\xi}(u)\right|\,du \lesssim U_n^{-\beta}.\) As for the variance of \(\widehat{p}_\xi,\) we get 
\begin{multline*}
\Var\left(\widehat{p}_\xi(x)|\B_n\right) = \Var\left(\int_{-U_n} ^{U_n} e^{-\i ux}(\widehat{\phi}_X(u))^{1/m}\,du\Bigr|\B_n\right) \\
=\Var\left(\int_{-U_n} ^{U_n} e^{-\i ux} (1/m)(\phi_X(u))^{1/m-1}\left(\widehat{\phi}_X(u)-\phi_X(u)\right)\,du\right. \\
\left. \hspace{1cm}+ \int_{-U_n} ^{U_n} e^{-\i ux} \E_{\tau}\left[g_{\tau}\left(\widehat{\phi}_X(u),\phi_X(u)\right)\right]\left(\widehat{\phi}_X(u)-\phi_X(u)\right)^2 \,du \Bigr|\B_n \right) \\
=: S_1 + 2S_2 + S_3,
\end{multline*}
where 
\begin{eqnarray*}
S_1 &=& \Var\left(\int_{-U_n} ^{U_n} e^{-\i ux} (1/m)(\phi_X(u))^{1/m-1}\left(\widehat{\phi}_X(u)-\phi_X(u)\right)\,du \Bigr|\B_n\right),\\
S_2 &=& \int_{-U_n} ^{U_n} \int_{-U_n} ^{U_n} e^{-\i (u+v)x} (1/m)(\phi_X(u))^{1/m-1} \cov\Bigl(\widehat{\phi}_X(u)-\phi_X(u), \Bigr.\\
&& \hspace{2cm}\Bigl.\E_{\tau}\left[g_{\tau}\left(\widehat{\phi}_X(v),\phi_X(v)\right)\right]\left(\widehat{\phi}_X(v)-\phi_X(v)\right)^2\Bigr|\B_n\Bigr)\,du\,dv, \\
S_3 &=& \Var\left(\int_{-U_n} ^{U_n} e^{-\i ux} \E_{\tau}\left[g_{\tau}\left(\widehat{\phi}_X(u),\phi_X(u)\right)\right] \left(\widehat{\phi}_X(u)-\phi_X(u)\right)^2 \,du \Bigr|\B_n\right).
\end{eqnarray*}
Similarly to the proof of Theorem~2.1, we derive 
\begin{multline*}
	|S_1| \lesssim \frac{1}{n\P(\B_n)}\int_{-U_n} ^{U_n} \int_{-U_n} ^{U_n} |\phi_X(u)|^{1/m-1}|\phi_X(v)|^{1/m-1}|\phi_X(u-v)|\,du\,dv\\
	 +\frac{(\P(\B_n^c))^{1/2}}{n \P(\B_n)} \int_{-U_n} ^{U_n} \int_{-U_n} ^{U_n}|\phi_X(u)|^{1/m-1}|\phi_X(v)|^{1/m-1}\,du\,dv.
\end{multline*}
Using again the Cauchy-Schwarz inequality  we get
\begin{multline*}
|S_1| \lesssim  \frac{1}{n\P(\B_n)}\int_{-U_n} ^{U_n} |\phi_X(u)|^{2(1/m-1)}\,du \int_{\R} |\phi_X(v)|\,dv 
\\
  +  \frac{(\P(\B_n^c))^{1/2}}{n \P(\B_n)} \left(\int_{-U_n} ^{U_n}  |\phi_X(u)|^{1/m-1}\,du\right)^2 \\
   \lesssim \frac{U_n^{1+2(\beta+1) (m-1)}}{n\,q_n}+
 \frac{(1-q_n)^{1/2}}{n\, q_n}   U_n^{2+2(\beta+1)(m-1)}.
 \end{multline*}
As for \(S_2\), we can establish the following upper bound by the  application of the H\"older inequality,
\begin{eqnarray*}
|S_2| &\lesssim &   \frac{1}{(\mathsf{P}(\B_{n}))^{1/2}n^{3/2}} \Bigl( \int_{-U_n} ^{U_n}   |\phi_X(u)|^{1/m-1} \, du \Bigr) \Bigl( \int_{-U_n} ^{U_n}|\phi_X(v)|^{1/m-2}
\,dv \Bigr) \\ 
&&\hspace{6cm} \lesssim \frac{1}{q_n^{1/2}\, n^{3/2}} U_n^{2+(\beta+1)(3m-2)},
\\
|S_3| &\lesssim &  \frac{1}{(\mathsf{P}(\B_{n}))^{1/2}n^{2}} \Bigl( \int_{-U_n} ^{U_n}   |\phi_X(u)|^{1/m-2} \, du \Bigr) \Bigl( \int_{-U_n} ^{U_n}|\phi_X(v)|^{1/m-2}
\,dv \Bigr)\\ 
&&\hspace{6cm} \lesssim \frac{1}{q_n^{1/2}\, n^{2}} U_n^{2+(\beta+1)(4m-2)}.
\end{eqnarray*}
Combining all results, we arrive at the desired statement. Corollary~5.2 follows from Proposition 3.3 in Belomestny et al.~(\citeyear{BR_Levy}), because 
\begin{eqnarray*}
\P(\B_n^c)&\leq & 
 \P\bigl(\|\phi_X-\widehat \phi_X\|_{[-U_n,U_n]}
 \geq \kappa \inf_{w \in [-U_n, U_n]} |\phi_X(w)|
 \bigr)
\\
&\leq& 
 \P\bigl(\|\phi_X-\widehat \phi_X\|_{[-U_n,U_n]}
 \geq \kappa M^m U_n^{-(\beta+1) m}
 \bigr)
\lesssim\left(\sqrt{n}U_n\right)^{-2}
\end{eqnarray*}
provided that \(18U_n^{m(\beta+1)}\sqrt{\frac{\log (n U_n)}{n}}\leq \varkappa M^m . \) The last condition  is fulfilled due to our choice of the sequence \(U_n\).

\section*{Acknowledgment}
The article was prepared within the framework of the HSE University Basic Research Program.

\appendix
\section{Taylor series expansion} 

\begin{lemma}
\label{lem:taylor}
Let \(f\colon\C\to\C\) be a function that is \(k\) times differentiable (\(k=1,2,...\)) in some vicinity of a point \(a\in\C\). Then
\[
f(z) = \sum\limits_{j=0} ^{k-1} \frac{f^{(j)}(a)}{j!}(z-a)^j
+\frac{1}{(k-1)!}\E\left[(1-\tau)^{k-1}f^{(k)}(a+\tau(z-a))\right](z-a)^k,
\]
where \(\tau\) is a random variable uniformly distributed on \([0,1]\).
\end{lemma}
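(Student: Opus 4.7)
The statement is essentially Taylor's theorem with integral remainder, rewritten so that the integral appears as an expectation with respect to a uniform random variable $\tau \in [0,1]$. My plan is to reduce the complex-variable statement to the classical one-dimensional Taylor formula by parametrising the line segment from $a$ to $z$.

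The first step is to define the auxiliary function $g(t) := f(a + t(z-a))$ for $t \in [0,1]$. Since $f$ is assumed to be $k$ times differentiable in a neighbourhood of $a$, and the segment $\{a + t(z-a): t\in[0,1]\}$ lies in that neighbourhood (at least for $z$ sufficiently close to $a$, which is the regime in which the expansion is applied in the main text), the function $g$ is $k$ times differentiable on $[0,1]$. A direct application of the chain rule gives
\[
g^{(j)}(t) = (z-a)^j\, f^{(j)}\bigl(a + t(z-a)\bigr), \qquad j=0,1,\ldots,k.
\]

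The second step is to invoke the standard one-dimensional Taylor formula with integral remainder applied to $g$ expanded around $t=0$ and evaluated at $t=1$:
\[
g(1) = \sum_{j=0}^{k-1} \frac{g^{(j)}(0)}{j!} + \frac{1}{(k-1)!}\int_0^1 (1-t)^{k-1}\, g^{(k)}(t)\,dt.
\]
Substituting the expressions for $g^{(j)}(0)$ and $g^{(k)}(t)$ from the chain rule computation yields
\[
f(z) = \sum_{j=0}^{k-1} \frac{f^{(j)}(a)}{j!}(z-a)^j + \frac{(z-a)^k}{(k-1)!}\int_0^1 (1-t)^{k-1} f^{(k)}\bigl(a + t(z-a)\bigr)\,dt.
\]

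The third and final step is simply to rewrite the integral as an expectation: if $\tau$ is uniformly distributed on $[0,1]$, then
\[
\int_0^1 (1-t)^{k-1} f^{(k)}\bigl(a + t(z-a)\bigr)\,dt = \E\Bigl[(1-\tau)^{k-1} f^{(k)}\bigl(a + \tau(z-a)\bigr)\Bigr],
\]
which gives the claimed identity. There is no serious obstacle here; the only subtlety is ensuring the segment $[a,z]$ lies inside the region of differentiability of $f$, which is guaranteed by the hypothesis ``in some vicinity of $a$'' together with $z$ being close enough to $a$ (as used throughout the applications in the main text where $z = \widehat{\phi}_X(u)$ and $a = \phi_X(u)$ on the events $\A_n$ or $\B_n$). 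If one prefers to avoid invoking the real-variable Taylor formula as a black box, the same identity can alternatively be proved by induction on $k$: the base case $k=1$ is the fundamental theorem of calculus applied to $g$, and the inductive step follows from a single integration by parts against $d\bigl(-(1-t)^k/k\bigr)$.
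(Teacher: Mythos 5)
Your proof is correct: parametrising the segment by $g(t)=f(a+t(z-a))$, applying the one-dimensional Taylor formula with integral remainder to the complex-valued function $g$ of a real variable, and observing that the uniform density on $[0,1]$ is identically $1$ so the remainder integral is exactly the stated expectation. The paper itself states this lemma without proof, treating it as the standard integral-remainder Taylor expansion written probabilistically, so there is nothing to compare against; your argument (including the remark that the segment $[a,z]$ must lie in the region of differentiability, which is how the lemma is used on the events $\A_n$ and $\B_n$) is the standard and expected one.
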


\section{Sufficient conditions guaranteeing~\eqref{LNN}}\label{appB}
\begin{proposition}
If the distribution of \(\tau\) is infinitely divisible, then the function
\(\phi_\Lambda\) doesn't have zeros on \(\R.\)
\end{proposition}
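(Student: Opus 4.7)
\medskip

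My plan is to show that $\Lambda$ inherits infinite divisibility from $\tau$, and then to invoke the classical fact that the characteristic function of an infinitely divisible law has no real zeros. The crucial observation is that the compound structure of $\Lambda$ interacts perfectly with an i.i.d.\ decomposition of $\tau$: chopping the number of summands into $n$ i.i.d.\ pieces chops $\Lambda$ into $n$ i.i.d.\ pieces as well.

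Concretely, fix $n\ge 1$. Using the infinite divisibility of $\tau$ I would produce i.i.d.\ nonnegative integer-valued $\tau^{(n)}_1,\ldots,\tau^{(n)}_n$ with $\tau\eqd\tau^{(n)}_1+\cdots+\tau^{(n)}_n$. On an enlarged probability space carrying an i.i.d.\ sequence $\xi_1,\xi_2,\ldots$ with the law of $\xi$ independent of the $\tau^{(n)}_i$, I would set $S_0=0$, $S_j=\tau^{(n)}_1+\cdots+\tau^{(n)}_j$ and
\[
\Lambda^{(n)}_i := \sum_{k=S_{i-1}+1}^{S_i}\xi_k,\qquad i=1,\ldots,n.
\]
Since the blocks of $\xi$-indices are disjoint and the $\tau^{(n)}_i$ are i.i.d., the $\Lambda^{(n)}_i$ are i.i.d.; and their total sum is $\sum_{k=1}^{S_n}\xi_k$, which has the law of $\Lambda$. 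Thus $\Lambda$ is infinitely divisible, and $\phi_\Lambda(u)\ne 0$ on $\R$ by the standard theorem.

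A more direct, essentially equivalent route would be to invoke the Feller compound-Poisson representation of $\Z_+$-valued infinitely divisible laws, $\mathcal{P}_\tau(z)=\exp\bigl(\sum_{k\ge 1}c_k(z^k-1)\bigr)$ with $c_k\ge 0$, and to substitute $z=\phi_\xi(u)$ so as to obtain
\[
\phi_\Lambda(u)=\mathcal{P}_\tau(\phi_\xi(u))=\exp\!\Bigl(\sum_{k\ge 1}c_k\bigl(\phi_\xi(u)^k-1\bigr)\Bigr),
\]
which is the exponential of a (possibly complex) number and hence never zero.

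The only delicate point I expect is ensuring that the summands $\tau^{(n)}_i$ may be taken $\Z_+$-valued, which is needed so that the random sums $\Lambda^{(n)}_i$ make sense; this is automatic for $\Z_+$-valued infinitely divisible distributions (one could quote Feller's characterization), and so no additional work is required.
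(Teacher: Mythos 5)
Your first construction is precisely the paper's proof: the paper factors the probability generating function as $P_\tau(z)=(P_n(z))^n$ and concludes that $\Lambda$ is the sum of $n$ i.i.d.\ copies of $\xi_1+\cdots+\xi_{\tau_n}$, hence infinitely divisible, hence with a zero-free characteristic function; your block coupling merely spells out why the pieces are i.i.d. Your second route, via Feller's representation $\mathcal{P}_\tau(z)=\exp\bigl(\sum_{k\ge 1}c_k(z^k-1)\bigr)$ followed by the substitution $z=\phi_\xi(u)$, is an equivalent and slightly more direct shortcut. The one place where you are too quick is the claim that integer-valued $n$-th convolution roots come ``for free'': Feller's characterization guarantees this only under the \emph{lattice} notion of infinite divisibility (compound Poisson on the nonnegative integers, which in particular carries an atom at $0$), whereas a law on $\{1,2,\dots\}$ that is infinitely divisible merely as a real random variable (e.g.\ $\tau\equiv 1$, or a shifted Poisson) has non-integer roots, and then the conclusion genuinely fails, since $\phi_\Lambda=P_\tau\circ\phi_\xi$ vanishes wherever $\phi_\xi$ does. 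The paper's proof tacitly makes the same assumption by positing that $\tau_n$ has a pgf, so this is a shared convention rather than a defect peculiar to your argument, but it should be stated explicitly rather than declared automatic.
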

\begin{proof}
We have 
\(
\phi_\Lambda(u) = \mathcal{P}_\tau\bigl( 
\phi_\xi(u)
\bigr),\)
where \(\mathcal{P}_\tau(z) = \E [z^\tau]\) is the probability-generating function of \(\tau.\) Since \(\tau\) is infinitely divisible, for any \(n \in \N\), there exists a r.v. \(\tau_n\)  with probability generating function \(\mathcal{P}_n\) such that \(\mathcal{P}_\tau(z) = (\mathcal{P}_n(z))^n \; \forall z \in \C.\) Therefore, \(\Lambda\) has the same distribution as the sum on \(n\) independent copies of \(\xi_1+...+\xi_{\tau_n}\). 
\end{proof}
\begin{proposition} \label{propp}
Denote \(\mathtt{r}_k:=\P(\tau=k), \; k=1,2,...\).
 Assume that  the random variable \(\xi\) has an absolutely continuous distribution with a  finite second moment. 
 \begin{enumerate}
\item Then there exist some positive constants \(u_\circ \leq u^\circ,\) such that \(\phi_\Lambda(u) \ne 0\) for any \(|u| < u_\circ\) and \(|u| \geq u^\circ\).
\item \(u^\circ= u_\circ\) (that is, \(\phi_\Lambda(u)\) does not have real zeros) if any of the following conditions is fulfilled:
\begin{enumerate}
\item \(\mathtt{r}_m  > 1/2,\) where \(m = \argmin_{k=1,2,...} \bigl\{ \mathtt{r}_k  \ne 0 \bigr\}\);
\item the distribution of \(\xi\) is infinitely divisible with L{\'e}vy triplet \((\mu, c, \nu)\), where \(c>0\), and 
\begin{eqnarray*}
\mathtt{r}_m>\frac{1}{1+\alpha}, \quad 
\mbox{where}\qquad 
\alpha = 
\exp\Bigl\{
\frac{\pi^2}{8} \frac{
c^2}{\Var(\xi) \E [\tau]}
\Bigr\}>1.
\end{eqnarray*}
\end{enumerate}
\end{enumerate}
\end{proposition}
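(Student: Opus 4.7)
The plan rests on the identity $\phi_\Lambda(u)=P_\tau(\phi_\xi(u))$ combined with the factorization
\[
\phi_\Lambda(u)=\phi_\xi(u)^m\Bigl[\mathtt{r}_m+\sum_{k>m}\mathtt{r}_k\phi_\xi(u)^{k-m}\Bigr]=:\phi_\xi(u)^m\,R(u).
\]
Since the paper's standing assumption rules out real zeros of $\phi_\xi$, the prefactor $\phi_\xi(u)^m$ never vanishes and the issue reduces to showing $R(u)\neq 0$ on the relevant set. For Part~(1), observe $R(0)=\sum_{k\geq m}\mathtt{r}_k=1$ and $R$ is continuous, so $R\neq 0$ on some interval $(-u_\circ,u_\circ)$; for large $|u|$, absolute continuity of $\xi$ and the Riemann--Lebesgue lemma give $|\phi_\xi(u)|\to 0$, hence $R(u)\to\mathtt{r}_m>0$ by dominated convergence and $R$ stays bounded away from zero for $|u|\geq u^\circ$ with $u^\circ$ large enough.

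For Part~(2a), the crude estimate $|\phi_\xi(u)|\leq 1$ gives $\bigl|\sum_{k>m}\mathtt{r}_k\phi_\xi(u)^{k-m}\bigr|\leq 1-\mathtt{r}_m<\mathtt{r}_m$, so by the reverse triangle inequality $|R(u)|\geq 2\mathtt{r}_m-1>0$ uniformly in $u\in\R$. For Part~(2b), the refined estimate $|\phi_\xi(u)|^{k-m}\leq|\phi_\xi(u)|$ (valid since $|\phi_\xi|\leq 1$ and $k-m\geq 1$) yields
\[
|R(u)|\geq\mathtt{r}_m-(1-\mathtt{r}_m)|\phi_\xi(u)|,
\]
which is strictly positive as soon as $|\phi_\xi(u)|\leq 1/\alpha$, using the hypothesis $\mathtt{r}_m>1/(1+\alpha)$. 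The L\'evy--Khintchine representation with Gaussian part $c>0$ gives $|\phi_\xi(u)|\leq e^{-c^2u^2/2}$, and a direct computation shows this bound equals $1/\alpha$ precisely at $|u|=u_*:=\pi/(2\sqrt{\E[\tau]\Var(\xi)})$, so the outer regime $|u|\geq u_*$ is handled. For the inner regime $|u|<u_*$, the plan is to recenter via $|\phi_\Lambda(u)|=|\phi_{\Lambda-\E\Lambda}(u)|$ and establish $\Re\phi_{\Lambda-\E\Lambda}(u)>0$ by combining a conditioning-on-$\tau$ argument (so that the within-sample variance $\E[\tau]\Var(\xi)$ appears as the relevant scale) with a sharp Bernstein-type trigonometric inequality $\cos x\geq 1-(2x/\pi)^2$ on $[-\pi/2,\pi/2]$, calibrated so that positivity holds exactly on $|u|<u_*$.

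The main obstacle I anticipate is the small-$|u|$ regime in Part~(2b): the sharp constant $\pi^2/8$ in $\alpha$ forces the use of a refined trigonometric bound, since the naive $\cos x\geq 1-x^2/2$ produces the wrong threshold $\sqrt{2}$ in place of $\pi/2$. One must also carefully absorb the extra $\Var(\tau)(\E\xi)^2$ contribution to $\Var(\Lambda)$ via the recentering, so that the effective quantity controlling the small-$|u|$ bound is exactly $\E[\tau]\Var(\xi)$ matching the definition of $\alpha$, and must handle the unboundedness of $\Lambda$ (for which the pointwise Bernstein inequality cannot be applied directly) by splitting according to whether $|u\Lambda|\leq\pi/2$ or not.
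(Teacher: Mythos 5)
Parts (1), (2a), and the outer regime of (2b) in your plan coincide with the paper's argument: the same factorization through $R(u)=\mathtt{r}_m+\sum_{k>m}\mathtt{r}_k\phi_\xi(u)^{k-m}$, Riemann--Lebesgue for large $|u|$, the bound $|R|\geq 2\mathtt{r}_m-1$ for (2a), and the bound $|\phi_\xi(u)|\le e^{-c^2u^2/2}$ combined with $|\phi_\xi|^{k-m}\le|\phi_\xi|$ for (2b), with the correct threshold $u_*=\pi/(2\sqrt{\E[\tau]\Var(\xi)})$. The genuine gap is the inner regime $|u|<u_*$ of (2b). The paper does not attempt a from-scratch argument there: it invokes Ushakov's Theorem~2.10.1 as a black box, which states that a characteristic function with finite variance $\sigma^2$ has no zeros on $|u|<\pi/(2\sigma)$, applied to $\Lambda$ (this is also how it gets the explicit $u_\circ$ in Part~(1)). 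Your proposed substitute does not work as written. First, the inequality $\cos x\ge 1-(2x/\pi)^2$ is false near $x=0$: since $4/\pi^2<1/2$, one has $\cos x\approx 1-x^2/2<1-(2x/\pi)^2$ for small $x\neq 0$, so the parabola is an upper, not a lower, envelope there; the sharp constant $\pi/2$ (as opposed to the $\sqrt 2$ you get from $\cos x\ge 1-x^2/2$) cannot be obtained by a pointwise minorant of this form, which is precisely why the cited theorem is not a one-line computation.

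Second, the recentering you propose does not produce the scale $\E[\tau]\Var(\xi)$. Subtracting the deterministic mean $\E\Lambda$ leaves the variance unchanged at $\Var(\Lambda)=\E[\tau]\Var(\xi)+\Var(\tau)(\E\xi)^2$, and conditioning on $\tau$ does not help: to isolate the within-sample variance you would need to subtract the \emph{random} centering $\tau\,\E\xi$, but $|\phi_{\Lambda-\tau\E\xi}(u)|=|P_\tau(e^{-\i u\E\xi}\phi_\xi(u))|$ is not equal to $|\phi_\Lambda(u)|=|P_\tau(\phi_\xi(u))|$ in general, so non-vanishing of one does not transfer to the other. You have in fact put your finger on a real issue --- Ushakov's theorem only yields the zero-free interval $|u|<\pi/(2\sqrt{\Var\Lambda})$, which meets the outer regime $|u|\ge u_*$ only when $\Var(\tau)(\E\xi)^2=0$ (e.g.\ $\E\xi=0$, the symmetric case relevant to the paper's Corollary~\ref{cor-sym}); the paper's own proof silently identifies $\sigma^2$ with $\E[\tau]\Var(\xi)$ at this step. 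But flagging the obstacle is not the same as resolving it: as it stands your plan has no valid argument covering $\pi/(2\sqrt{\Var\Lambda})\le|u|<u_*$, and the ingredient you would use to close it (the trigonometric minorant) is incorrect. To complete the proof along the paper's lines, cite the $\pi/(2\sigma)$ zero-free theorem for $\phi_\Lambda$ and either restrict to $\E\xi=0$ (or degenerate $\tau$) or replace $\E[\tau]\Var(\xi)$ by $\Var(\Lambda)$ in the definition of $\alpha$.
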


\begin{proof}
\textbf{1.} We have 
\begin{equation} \label{ll2}\frac{ | \phi_\Lambda(u)
|}
{|\phi_\xi(u)|^m}
 \geq
\mathtt{r}_m 
-  
\sum_{k=m+1}^\infty \mathtt{r}_k  |\phi_\xi(u)|^{k-m}.
\end{equation}
Using the Riemann--Lebesgue lemma, we get that for any \(\eps\) smaller than 1, there exists some \(u_\eps\) such that \(|\phi_\xi(u)| < \eps\) for all \(|u| > u_\eps.\) Note that for any \(\eps<1, \)
\begin{eqnarray*}
\frac{ | \phi_\Lambda(u)
|}
{|\phi_\xi(u)|^m} \geq \mathtt{r}_m - \eps \sum_{k=m+1}^\infty \mathtt{r}_k = 
\mathtt{r}_m - \eps (1- \mathtt{r}_m).
\end{eqnarray*}
Therefore, \(\phi_\Lambda(u)\) doesn't have any zeros  with absolute value larger that \(u^\circ:=u_{\eps^*}\), where \(\eps^* < \mathtt{r}_m/(1-\mathtt{r}_m).\)

On another side, Theorem~2.10.1 from Ushakov~(\citeyear{Ushakov}) yields that the characteristic function for any (not necessary infinitely divisible) r.v. \(\eta\) doesn't have zeros for \(|u| < \pi / (2 \sigma),\) where \(\sigma\) is the standard deviation of the distribution with characteristic function \(\phi_\Lambda.\)  Therefore, we can choose \(u_\circ := \min\Bigl(\pi / (2 \sigma), u^\circ\Bigr)\)  and get the required statement.\newline
\textbf{2(a).}  When  \(\mathtt{r}_m>1/2\), we have 
\begin{eqnarray*}
\frac{ | \phi_\Lambda(u)  |}
{|\phi_\xi(u)|^m}
\geq 
\mathtt{r}_m 
-  
\sum_{k=m+1}^\infty \mathtt{r}_k |\phi_\xi(u)|^{k-m}\geq 
\mathtt{r}_m - \sum_{k=m+1}^\infty \mathtt{r}_k \geq 2 \mathtt{r}_m -1 >0.
\end{eqnarray*}
\textbf{2(b).} Since \(c>0,\) we can use the  inequality 
\(
\Re \psi_\xi(u)  \leq - \frac{1}{2} u^2 c^2 
\)
to continue the line of reasoning in~\eqref{ll2}:
\begin{eqnarray} \label{ll3}\frac{ | \phi_\Lambda(u)
|}
{|\phi_\xi(u)|^m}
&\geq& 
\mathtt{r}_m 
-  
\sum_{k=m+1}^\infty \mathtt{r}_k e^{-(k-m) u^2 c^2/2}.
\end{eqnarray}
Since the function in the right-hand side monotonically increases,
it is sufficient to show that it is positive at \(
\tilde{u}= \pi/ (2\sigma)\).
 We have
\begin{eqnarray*}
\mathtt{r}_m  - \sum_{k=m+1}^\infty \mathtt{r}_k e^{- (k-m) \tilde{u} ^2 c^2 /2 }
&\geq& 
\mathtt{r}_m - \Bigl( \sum_{k=m+1}^\infty \mathtt{r}_k \Bigr)  e^{-\tilde{u} ^2 c^2 /2}\\
&\geq& \mathtt{r}_m  - (1- \mathtt{r}_m)\alpha^{-1},
\end{eqnarray*}
where the last expression is positive iff \(\mathtt{r}_m>\bigl( 1+\alpha\bigr)^{-1}.\) This observation completes the proof.
%
\end{proof}

\bibliographystyle{imsart-nameyear}
\bibliography{references}

\newcommand{\noopsort}[1]{}
\begin{thebibliography}{24}

\bibitem[\protect\citeauthoryear{{Asmussen, S. and Albrecher,
  H.}}{2010}]{AA_ruin}
\begin{bbook}[author]
\bauthor{\bsnm{{Asmussen, S. and Albrecher, H. }}}
(\byear{2010}).
\btitle{{Ruin Probabilities}}
\bvolume{14}.
\bpublisher{World scientific}.
\end{bbook}
\endbibitem

\bibitem[\protect\citeauthoryear{{Belomestny, D.\!\!, Morozova, E. and Panov,
  V.}}{2024}]{BPM2024}
\begin{barticle}[author]
\bauthor{\bsnm{{Belomestny, D. \!\!, Morozova, E. and Panov, V. }}}
(\byear{2024}).
\btitle{{Nonparametric statistical inference for compound models}}.
\bjournal{Statistics}
\bvolume{58}
\bpages{943--960}.
\end{barticle}
\endbibitem

\bibitem[\protect\citeauthoryear{{Belomestny, D. and Reiss,
  M.}}{2015}]{BR_Levy}
\begin{binproceedings}[author]
\bauthor{\bsnm{{Belomestny, D. and Reiss, M. }}}
(\byear{2015}).
\btitle{{Estimation and calibration of L{\'e}vy models via Fourier methods}}.
In \bbooktitle{{L{\'e}vy Matters IV. Lecture Notes in Math.}}
\bvolume{2128}.
\bpublisher{Springer}.
\end{binproceedings}
\endbibitem

\bibitem[\protect\citeauthoryear{{B{\o}gsted, M. and Pitts, S.
  M.}}{2010}]{Bogsted}
\begin{barticle}[author]
\bauthor{\bsnm{{B{\o}gsted, M. and Pitts, S. M. }}}
(\byear{2010}).
\btitle{{Decompounding random sums: a nonparametric approach}}.
\bjournal{Ann. Inst. Statist. Math.}
\bvolume{62}
\bpages{855--872}.
\end{barticle}
\endbibitem

\bibitem[\protect\citeauthoryear{{Brenner Miguel, S.\!\!, Comte, F. and
  Johannes, J.}}{{2023}}]{MCJ2023}
\begin{barticle}[author]
\bauthor{\bsnm{{Brenner Miguel, S. \!\!, Comte, F. and Johannes, J. }}}
(\byear{{2023}}).
\btitle{{Linear functional estimation under multiplicative measurement error}}.
\bjournal{{Bernoulli}}
\bvolume{{29}}
\bpages{2247--2271}.
\end{barticle}
\endbibitem

\bibitem[\protect\citeauthoryear{{Buchmann, B. and Gr{\"u}bel,
  R.}}{2003}]{BuchmannGrubel:03}
\begin{barticle}[author]
\bauthor{\bsnm{{Buchmann, B. and Gr{\"u}bel, R. }}}
(\byear{2003}).
\btitle{{Decompounding: an estimation problem for Poisson random sums}}.
\bjournal{Ann. Statist.}
\bvolume{31}
\bpages{1054--1074}.
\end{barticle}
\endbibitem

\bibitem[\protect\citeauthoryear{{Buchmann, B. and Gr{\"u}bel,
  R.}}{2004}]{BuchmannGrubel:04}
\begin{barticle}[author]
\bauthor{\bsnm{{Buchmann, B. and Gr{\"u}bel, R. }}}
(\byear{2004}).
\btitle{Decompounding Poisson random sums: recursively truncated estimates in
  the discrete case}.
\bjournal{Ann. Inst. Statist. Math.}
\bvolume{56}
\bpages{743--756}.
\end{barticle}
\endbibitem

\bibitem[\protect\citeauthoryear{{Chydzinski, A.}}{2020}]{chydzinski2020queues}
\begin{barticle}[author]
\bauthor{\bsnm{{Chydzinski, A. }}}
(\byear{2020}).
\btitle{{Queues with the dropping function and non-Poisson arrivals}}.
\bjournal{{IEEE Access}}
\bvolume{8}
\bpages{39819--39829}.
\end{barticle}
\endbibitem

\bibitem[\protect\citeauthoryear{{Coca, A. J.}}{2018}]{Coca_decompounding}
\begin{barticle}[author]
\bauthor{\bsnm{{Coca, A. J. }}}
(\byear{2018}).
\btitle{{Efficient nonparametric inference for discretely observed compound
  Poisson processes}}.
\bjournal{Probab. Theory Related Fields}
\bvolume{170}
\bpages{475--523}.
\end{barticle}
\endbibitem

\bibitem[\protect\citeauthoryear{{Comte, F.\!\!, Duval, C. and Genon-Catalot,
  V.}}{2014}]{Comte_CPP}
\begin{barticle}[author]
\bauthor{\bsnm{{Comte, F. \!\!, Duval, C. and Genon-Catalot, V. }}}
(\byear{2014}).
\btitle{Nonparametric density estimation in compound Poisson processes using
  convolution power estimators}.
\bjournal{Metrika}
\bvolume{77}
\bpages{163--183}.
\end{barticle}
\endbibitem

\bibitem[\protect\citeauthoryear{{Den Boer, A. and Mandjes,
  M.}}{2017}]{den2017convergence}
\begin{barticle}[author]
\bauthor{\bsnm{{Den Boer, A. and Mandjes, M. }}}
(\byear{2017}).
\btitle{{Convergence rates of Laplace-transform based estimators}}.
\bjournal{Bernoulli}
\bvolume{23}
\bpages{2533--2557}.
\end{barticle}
\endbibitem

\bibitem[\protect\citeauthoryear{{Goldenshluger, A. and Kim,
  T.}}{{2021}}]{GK2021}
\begin{barticle}[author]
\bauthor{\bsnm{{Goldenshluger, A. and Kim, T. }}}
(\byear{{2021}}).
\btitle{{Density deconvolution with non--standard error distributions: Rates of
  convergence and adaptive estimation}}.
\bjournal{{Electronic J. Stat.}}
\bvolume{{15}}
\bpages{{3394--3427}}.
\end{barticle}
\endbibitem

\bibitem[\protect\citeauthoryear{{Goldenshluger, A. and Lepski,
  O.}}{{2011}}]{GL2011}
\begin{barticle}[author]
\bauthor{\bsnm{{Goldenshluger, A. and Lepski, O. }}}
(\byear{{2011}}).
\btitle{{Bandwidth selection in kernel density estimation: oracle inequalities
  and adaptive minimax optimality}}.
\bjournal{{Ann. Statist.}}
\bvolume{{39}}
\bpages{{1608--1632}}.
\end{barticle}
\endbibitem

\bibitem[\protect\citeauthoryear{{Gorenflo, R. and Hofmann, B.}}{1994}]{GH1994}
\begin{barticle}[author]
\bauthor{\bsnm{{Gorenflo, R. and Hofmann, B. }}}
(\byear{1994}).
\btitle{{On autoconvolution and regularization}}.
\bjournal{{Inverse Problems}}
\bvolume{10}
\bpages{353}.
\end{barticle}
\endbibitem

\bibitem[\protect\citeauthoryear{{Gugushvili, S.\!\!, van der Meulen, F. and
  Spreij, P.}}{2018}]{Gugushvili2018non}
\begin{barticle}[author]
\bauthor{\bsnm{{Gugushvili, S. \!\!, van der Meulen, F. and Spreij, P. }}}
(\byear{2018}).
\btitle{{A non-parametric Bayesian approach to decompounding from high
  frequency data}}.
\bjournal{{Stat. Inference Stoch. Process.}}
\bvolume{21}
\bpages{53--79}.
\end{barticle}
\endbibitem

\bibitem[\protect\citeauthoryear{{Hansen, M. B. and Pitts, S.
  M.}}{2006}]{HansenPitts}
\begin{barticle}[author]
\bauthor{\bsnm{{Hansen, M. B. and Pitts, S. M. }}}
(\byear{2006}).
\btitle{{Nonparametric inference from the M/G/1 workload}}.
\bjournal{Bernoulli}
\bvolume{12}
\bpages{737--759}.
\end{barticle}
\endbibitem

\bibitem[\protect\citeauthoryear{{Lin, Z. and Bai, Z.}}{2011}]{LinBai}
\begin{bbook}[author]
\bauthor{\bsnm{{Lin, Z. and Bai, Z. }}}
(\byear{2011}).
\btitle{{Probability Inequalities}}.
\bpublisher{Springer Science \& Business Media}.
\end{bbook}
\endbibitem

\bibitem[\protect\citeauthoryear{{Meister, A.}}{2009}]{MeisterDeconv}
\begin{barticle}[author]
\bauthor{\bsnm{{Meister, A. }}}
(\byear{2009}).
\btitle{{Deconvolution problems in nonparametric statistics}}.
\bjournal{Lect. Notes Stat.}
\bpages{5--138}.
\end{barticle}
\endbibitem

\bibitem[\protect\citeauthoryear{{Neubert, B.\!\!, Comte, F. and Johannes,
  J.}}{2024}]{NCJ2024}
\begin{bmisc}[author]
\bauthor{\bsnm{{Neubert, B. \!\!, Comte, F. and Johannes, J. }}}
(\byear{2024}).
\btitle{{Quadratic functional estimation from observations with multiplicative
  measurement error}}.
\bhowpublished{ArXiv:2408.06862}.
\end{bmisc}
\endbibitem

\bibitem[\protect\citeauthoryear{{Sundt, Bj. and Vernic,
  R.}}{2009}]{SundtVernic:09}
\begin{bbook}[author]
\bauthor{\bsnm{{Sundt, Bj. and Vernic, R. }}}
(\byear{2009}).
\btitle{Recursions for Convolutions and Compound Distributions With Insurance
  Applications}.
\bpublisher{Springer Science \& Business Media}.
\end{bbook}
\endbibitem

\bibitem[\protect\citeauthoryear{{Tsybakov, A.}}{2009}]{Tsybakov}
\begin{bbook}[author]
\bauthor{\bsnm{{Tsybakov, A. }}}
(\byear{2009}).
\btitle{Introduction to Nonparametric Estimation}.
\bpublisher{{Springer}}.
\end{bbook}
\endbibitem

\bibitem[\protect\citeauthoryear{{Ushakov, N.G.}}{1999}]{Ushakov}
\begin{bbook}[author]
\bauthor{\bsnm{{Ushakov, N. G. }}}
(\byear{1999}).
\btitle{{Selected Topics in Characteristic Functions}}.
\bpublisher{De Gruyter}.
\end{bbook}
\endbibitem

\bibitem[\protect\citeauthoryear{{Van Es, B.\!\!, Gugushvili, S. and Spreij,
  P.}}{2007}]{VanEs_decompounding}
\begin{barticle}[author]
\bauthor{\bsnm{{Van Es, B. \!\!, Gugushvili, S. and Spreij, P. }}}
(\byear{2007}).
\btitle{A kernel type nonparametric density estimator for decompounding}.
\bjournal{{Bernoulli}}
\bvolume{13}
\bpages{672--694}.
\end{barticle}
\endbibitem

\bibitem[\protect\citeauthoryear{}{}]{JKK}
\begin{bbook}[author]

\end{bbook}
\endbibitem

\end{thebibliography}

\end{document}